\documentclass{article}

\usepackage{arxiv}

\usepackage[utf8]{inputenc} 
\usepackage[T1]{fontenc}    

\usepackage{hyperref}       
\usepackage{url}            
\usepackage{booktabs}       
\usepackage{amsfonts}       
\usepackage{nicefrac}       
\usepackage{microtype}      
\usepackage{graphicx}
\usepackage{natbib}
\usepackage{doi}

\usepackage{algorithm}

\usepackage{amsopn}

\usepackage{mathtools} 
\usepackage{amssymb} 
\usepackage{mathabx} 
\usepackage{siunitx}
\usepackage{mathscinet}

\usepackage{esint} 
\usepackage[bb=dsserif]{mathalpha} 

\usepackage{isomath}
\usepackage{algorithmic} 
\usepackage{textcomp} 

\usepackage{caption}
\usepackage{subcaption}

\newcommand{\Z}{\mathbb{Z}}
\newcommand{\R}{\mathbb{R}}
\newcommand{\Cfield}{\mathbb{C}}
\newcommand{\I}{\mathcal{I}}
\newcommand{\Lop}{\mathcal{L}}

\newcommand{\TDop}{\tilde{\mathcal{D}}}
\newcommand{\DopS}{\mathcal{D}^\textnormal{s}}
\newcommand{\TDopS}{\tilde{\mathcal{D}}^\textnormal{s}}
\newcommand{\Q}{\mathcal{Q}}
\newcommand{\Proj}{\mathcal{P}}

\newcommand{\vecx}{\vectorsym{x}}
\newcommand{\vecZero}{\vectorsym{0}}
\newcommand{\vecOne}{\vectorsym{1}}
\newcommand{\vecxi}{\vectorsym{\xi}}
\newcommand{\vecI}{\vectorsym{I}}
\newcommand{\vecu}{\vectorsym{u}}
\newcommand{\vecups}{\vectorsym{\upsilon}}
\newcommand{\vecv}{\vectorsym{v}}
\newcommand{\veck}{\vectorsym{k}}
\newcommand{\vecb}{\vectorsym{b}}
\newcommand{\vecw}{\vectorsym{w}}
\newcommand{\matE}{\matrixsym{E}}
\newcommand{\matF}{\matrixsym{F}}
\newcommand{\matB}{\matrixsym{B}}
\newcommand{\matU}{\matrixsym{U}}
\newcommand{\matV}{\matrixsym{V}}
\newcommand{\matVarep}{\matrixsym{\varepsilon}}
\newcommand{\matep}{\matrixsym{\epsilon}}
\newcommand{\mattau}{\matrixsym{\tau}}
\newcommand{\matsigma}{\matrixsym{\sigma}}
\newcommand{\tenC}{\tensorsym{C}}
\newcommand{\tenGamma}{\tensorsym{\Gamma}}
\newcommand{\iu}{\mathrm{i}\mkern1mu}
\newcommand{\nablaS}{\nabla^\textnormal{s}}
\newcommand{\otimesS}{\otimes^\textnormal{s}}
\newcommand{\Brackets}[1]{\left( #1 \right)}
\newcommand{\SquareBrackets}[1]{\left[ #1\right]}
\newcommand{\Braces}[1]{\left\{ #1\right\}}
\newcommand{\AngleBrackets}[1]{\left\langle #1 \right\rangle}

\newcommand{\Norm}[1]{\left\lVert #1 \right\rVert}
\newcommand{\Seminorm}[1]{\left\lvert #1 \right\rvert}
\newcommand{\vertiii}[1]{{\left\vert\kern-0.25ex\left\vert\kern-0.25ex\left\vert #1\right\vert\kern-0.25ex\right\vert\kern-0.25ex\right\vert}}
\newcommand{\dx}{\,\mathrm{d}}
\newcommand{\Sym}{\mathbb{S}}

\DeclareMathOperator{\Card}{card}
\DeclareMathSymbol{\shortminus}{\mathbin}{AMSa}{"39}

\DeclareMathOperator{\FFT}{FFT}

\usepackage{amsthm}
\theoremstyle{plain}
\newtheorem{theorem}{Theorem}

\newtheorem{lemma}{Lemma}

\newtheorem{proposition}{Proposition}
\newtheorem{definition}{Definition}
\newtheorem{remark}{Remark}

\usepackage{cleveref}

\title{Numerical analysis of several FFT-based schemes for computational homogenization}


\author{ \href{https://orcid.org/0000-0002-0085-6699}{\includegraphics[scale=0.06]{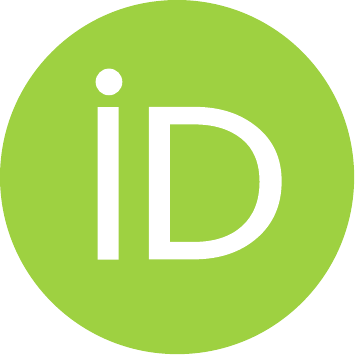}\hspace{1mm}Changqing Ye}\\
	Department of Mathematics\\
	The Chinese University of Hong Kong\\
	Hong Kong Special Administrative Region \\
	\texttt{cqye@math.cuhk.edu.hk} \\
	\And
	Eric T. Chung \\
	Department of Mathematics\\
	The Chinese University of Hong Kong\\
	Hong Kong Special Administrative Region \\
	\texttt{tschung@math.cuhk.edu.hk} \\
}



\hypersetup{
pdftitle={Numerical analysis of several FFT-based schemes for computational homogenization},
pdfsubject={74Q05, 74Q15, 65N12},
pdfauthor={Changqing YE},
pdfkeywords={Computational homogenization, FFT, effective coefficients, convergence, convergence rates, numerical analysis},
}

\begin{document}
\maketitle

\begin{abstract}
We study the convergences of several FFT-based schemes that are widely applied in computational homogenization for deriving effective coefficients, and the term ``convergence'' here means the limiting behaviors as spatial resolutions going to infinity. Those schemes include Moulinec-Suquent's scheme [Comput Method Appl M, 157 (1998), pp. 69-94], Willot's scheme [Comptes Rendus M\'{e}canique, 343 (2015), pp. 232-245], and the FEM scheme [Int J Numer Meth Eng, 109 (2017), pp. 1461-1489]. Under some reasonable assumptions, we prove that the effective coefficients obtained by those schemes are all convergent to the theoretical ones. Moreover, for the FEM scheme, we can present several convergence rate estimates under additional regularity assumptions.
\end{abstract}

\keywords{Computational homogenization \and FFT \and effective coefficients \and convergence \and convergence rates \and numerical analysis}

\section{Introduction and preliminaries} \label{sec:Intro}

In an inhomogeneous material that obeys the linear elasticity law, the fourth-order stiffness tensor $\tenC(\vecx)$ varies at different $\vecx$. The central goal of computational homogenization methods is determining an effective constitute law, which may serve as a surrogate model for downstream applications \cite{Milton2002,Zohdi2008}.

Let $Y=(0,1)^d$ be a Representative Volume Element (RVE) where $d=3$ through the whole paper, and $\tenC(\vecx)$ be the fourth-order stiffness tensor for any $\vecx \in Y$. We denote by $H^1_\#(Y)$ the closure of smooth $Y$-periodic functions with respect to ordinary $H^1$-norm \cite{Cioranescu1999}, and define $W^{1,p}_\#(Y)$ accordingly. The notation $H^1_\#(Y)$ will be replaced as $H^1_\#(Y;\R^d)$ or $H^1_\#(Y;\Cfield^d)$ for vector-valued functions, while $\Norm{u}_{W^{m,p}(\omega)}$ ($\Norm{u}_{L^p(\omega)}$, $\Norm{u}_{H^m(\omega)}$) should be always understood as $W^{m,p}$-norm ($L^p$-norm, $H^m$-norm) of $u$ on the domain $\omega$ regardless of whether $u$ is vector-valued or not. For a discrete set $D\subset \Z^d$, let $l(D;W)$ be the linear space of discrete functions which take inputs in $D$ and return values of $W$, where $W$ may be $\R$, $\Cfield$, $\R^d$, $\Cfield^d$ and $\Sym^d$ (the set of symmetric $d\times d$ matrices). The double dot operator ``$:$'' and the single dot operator ``$\cdot$'' are defined conventionally for any order tensors (including vectors and matrices), and note that now the matrix-vector product $\matE\vecu$ is rewritten as $\matE \cdot \vecu$. A popular method to derive the effective coefficients $\tenC^\textnormal{eff}$ of $\tenC(\vecx)$ is solving a periodic boundary value problem: find $\vecu \in H^1_\#(Y;\R^d)$ with $\AngleBrackets{\vecu}=\vecZero$ such that for any $\vecv \in H^1_\#(Y;\R^d)$
\begin{equation} \label{eq:varia form}
\AngleBrackets{\nablaS \vecv : \tenC(\vecx) : \nablaS \vecu} = -\AngleBrackets{\nablaS \vecv : \tenC(\vecx)} : \matE,
\end{equation}
where $\AngleBrackets{f}$ stands for $\fint_Y f \dx \vecx$, $\matE$ belongs to $\Sym^d$ and $\SquareBrackets{\nablaS \vecv}_{mn}=\frac{1}{2}\Brackets{\partial_m \SquareBrackets{\vecv}_n+\partial_n \SquareBrackets{\vecv}_m}$. The effective stiffness tensor $\tenC^\textnormal{eff}$ can be obtained by
\begin{equation} \label{eq:eff}
\tenC^\textnormal{eff}:\matE = \AngleBrackets{\tenC(\vecx):\Brackets{\nablaS \vecu + \matE}}
\end{equation}
via choosing different $\matE$.

The starting point of FFT-based homogenization is rewriting the above variational form into the Lippmann-Schwinger equation. Taking $\tenC^\textnormal{ref}$ as a linear elastic reference medium, which is a \emph{constant} fourth-order tensor satisfying certain coercive conditions, we can introduce a Green function $\tensorsym{G}^0$ such that for any $\matrixsym{F}(\vecx) \in L^2(Y;\Sym^d)$, the convolution $\tensorsym{G}^0 * \matrixsym{F}\in H^1_\#(Y;\R^d)$ is the solution to the variational form
\[
\AngleBrackets{\nablaS \vecv : \tenC^\textnormal{ref} : \nablaS \Brackets{\tensorsym{G}^0*\matrixsym{F}}} = \AngleBrackets{\nablaS \vecv : \matrixsym{F}}, \forall \vecv \in H^1_\#(Y;\R^d).
\]
By setting $[\tenC^\textnormal{ref}]_{mnpq}=\lambda^0\delta_{mn}\delta_{pq}+\mu^0(\delta_{mp}\delta_{nq}+\delta_{mq}\delta_{np})$ and taking the Fourier series expansion of $\matrixsym{F}(\vecx)$ as $\matrixsym{F}(\vecx)=\sum_{\vecxi \in \Z^d}\widehat{\matrixsym{F}}\exp(2\pi \iu  \vecx \cdot \vecxi)$, we can derive that for $\vecxi\neq \vectorsym{0}$
\[
\widehat{\tensorsym{G}^0*\matrixsym{F}}\SquareBrackets{\vecxi}=\frac{\iu }{2\pi\Seminorm{\vecxi}^2\mu^0} \SquareBrackets{\frac{\Brackets{\mu^0+\lambda^0}\vecxi\otimes \vecxi }{\Brackets{2\mu^0+\lambda^0} \Seminorm{\vecxi}^2}-\matrixsym{I}_d}  \cdot \widehat{\matrixsym{F}} \cdot \vecxi,
\]
where $\matrixsym{I}_d$ is the $d\times d$ identity matrix and $\otimes $ is the Kronecker product. Moreover, it is easy to show that \cref{eq:varia form} is equivalent to
\[
\vecu = -\tensorsym{G}^0*\Brackets{\delta \tenC:\Brackets{\nablaS \vecu+\matE}+\tenC^\textnormal{ref}:\matE} = -\tensorsym{G}^0*\Brackets{\delta \tenC:\Brackets{\nablaS \vecu+\matE}},
\]
where $\delta \tenC \coloneqq \tenC-\tenC^\textnormal{ref}$ and $\tensorsym{G}^0*(\tenC^\textnormal{ref}:\matE)$ vanishes due to that $\tenC^\textnormal{ref}:\matE$ is a constant matrix. If taking $\matVarep\coloneqq \nablaS \vecu+\matE$ and $\tensorsym{\Gamma}^0\coloneqq \nablaS \tensorsym{G}^0$, we can immediately read the above equation as
\begin{equation} \label{eq:LS}
\matVarep+\tensorsym{\Gamma}^0*\Brackets{\delta \tenC:\matVarep} = \matE
\end{equation}
the so-called Lippmann-Schwinger equation \cite{Lippmann1950}. The alternative form of \cref{eq:LS} in the Fourier space is
\[
\widehat{\matVarep}\SquareBrackets{\vecxi}=\left\{
\begin{aligned}
&-\widehat{\tensorsym{\Gamma}^0} : \widehat{\delta\tenC : \matVarep}, &\vecxi\neq \vectorsym{0}, \\
&\matE, &\vecxi = \vectorsym{0}.
\end{aligned}
\right.
\]
Specifically, when $[\tenC^\textnormal{ref}]_{mnpq}=\lambda^0\delta_{mn}\delta_{pq}+\mu^0(\delta_{mp}\delta_{nq}+\delta_{mq}\delta_{np})$, we can derive a closed form of $\widehat{\tensorsym{\Gamma}^0}$ as
\[
\begin{aligned}
[\widehat{\tensorsym{\Gamma}^0}]_{mnpq}[\vecxi] = &\frac{1}{4\Seminorm{\vecxi}^2\mu^0}\Brackets{\vecxi_m\vecxi_q\delta_{np}+\vecxi_n\vecxi_q\delta_{mp}+\vecxi_m\vecxi_p\delta_{nq}+\vecxi_n\vecxi_p\delta_{mq}}\\
&-\frac{\Brackets{\lambda^0+\mu^0}\vecxi_m\vecxi_n\vecxi_p\vecxi_q}{\mu^0\Brackets{\lambda^0+2\mu^0}\Seminorm{\vecxi}^4}
\end{aligned}
\]

Now it comes to discretization schemes. For simplicity, we equally split $Y$ into $N$ parts in every dimension. For any $d$-dimensional index $\vecI=(\vecI_1,\vecI_2,\vecI_3) \in \I_N \subset \mathbb{Z}^d$ with $\I_N=\Braces{\vecI\in \Z^d: 0\leq \vecI_m < N,\forall 1\leq m \leq d}$, we denote by
\[
Y_\vecI=\Brackets{\frac{\vecI_1}{N}, \frac{\vecI_1+1}{N}}\times \Brackets{\frac{\vecI_2}{N}, \frac{\vecI_2+1}{N}}\times \Brackets{\frac{\vecI_3}{N}, \frac{\vecI_3+1}{N}}
\]
an element in the language of Finite Element Methods (FEM), which could be interpreted as a pixel (for 2D problems) or voxel (for 3D problems). Correspondingly, the frequency domain is denoted by $\mathcal{F}_N\coloneqq\Braces{\vecxi\in \Z^d:-N/2\leq \vecxi_m<N/2,\forall 1\leq m \leq d}$. We always postulate that an image-like $\tenC_N(\vecx)$ (i.e., $\tenC_N(\vecx)$ is constant in every $Y_\vecI$) rather that the ground truth $\tenC(\vecx)$ is provided, and introduce a notation $\tenC_N^\star[\vecI]=\tenC_N(\vecx_\vecI)$ where $\vecx_\vecI$ is the geometric center of $Y_\vecI$. The basic scheme of FFT-based homogenization was proposed by Moulinec and Suquent in \cite{Moulinec1995,Moulinec1998}, and can be stated as \cref{alg:MS}. Depending on the type of a function's domain, the Fourier coefficients are defined differently, i.e. if $f\in L^2(Y;\Cfield)$, then $\widehat{f}[\vecxi]$ is determined by the formula
\[
f(\vecx) = \sum_{\vecxi \in \Z^d} \widehat{f}[\vecxi]\exp(2\pi\iu \vecxi\cdot\vecx),
\]
and if $f^\star\in l(\I_N;\mathbb{C})$, then $\widehat{f^\star}[\vecxi]$ satisfies
\[
f^\star[\vecI] = \sum_{\vecxi\in \mathcal{F}_N} \widehat{f^\star}[\vecxi] \exp(2\pi \iu \frac{\vecxi\cdot \vecI}{N}).
\]
Note that if is labeled with a superscript ``$\star$'', the function should be understood as a discrete one with $\I_N$ as its domain.

\begin{algorithm}
\caption{Moulinec-Suquent's scheme}
\label{alg:MS}
\begin{algorithmic}[1]
\STATE{Set $\tenC^\textnormal{ref}$ and $\matE$, initiate a tensor variable $\matVarep_N^\star \in l(\I_N;\Sym^d)$ with $\matVarep_N^\star[\vecI]=\matE$ for all $\vecI \in \I_N$}
\WHILE{not meet the convergence criterion}
\STATE{Evaluate $\matrixsym{\tau}_N^\star[\vecI] = \Brackets{\tenC_N^\star[\vecI]-\tenC^\textnormal{ref}}:\matVarep_N^\star[\vecI]$ for all $\vecI \in \I_N$}
\STATE{Perform a FFT $\widehat{\matrixsym{\tau}_N^\star}=\FFT\Brackets{\matrixsym{\tau}_N^\star}$}
\STATE{Initiate a temporary tensor variable $\widehat{\matep_N^\star}$ and evaluate $\widehat{\matep_N^\star}[\vecZero]=\matE$, $\widehat{\matep_N^\star}[\vecxi]=-\widehat{\tensorsym{\Gamma}^0}[\vecxi]:\widehat{\matrixsym{\tau}_N^\star}[\vecxi]$ for all $\vecxi \in  \mathcal{F}_N\setminus \Braces{\vecZero}$}
\STATE{Perform an inverse FFT $\matep_N^\star=\FFT^{-1}\Brackets{\widehat{\matep_N^\star}}$}
\STATE{Calculate the convergence indicator via $\matVarep_N^\star$ and $\matep_N^\star$ and $\matVarep_N^\star \leftarrow \matep_N^\star$}
\ENDWHILE
\RETURN $\matVarep_N^\star$ and $N^{-d}\sum_{\vecI\in \I_N} \tenC_N^\star[\vecI]:\matVarep_N^\star[\vecI]$
\end{algorithmic}
\end{algorithm}

Moulinec-Suquent's scheme (we also call it the basic scheme) gains considerable popularity. The reasons can be summarized as follows: 1) microstructures in heterogeneous materials may be rather complex, which causes that generating high quality meshes dominates overall computational overheads \cite{Hughes2005}; 2) the primary information of microstructures is usually provided by modern digital volume imaging techniques \cite{Larson2002,Poulsen2004,Landis2010}, and we may not be capable to retrieve the original geometric descriptions and to perform a preprocessing for FEMs; 3) the easy implementation of those schemes and highly optimized FFT packages (e.g., Intel\circledR MKL, FFTW \cite{Frigo2005}) secure the global efficiency; 4) generally, it is $\tenC^\textnormal{eff}$ the effective coefficients we are more interested in rather than $\vecu$ the local displacement or $\matVarep$ the local stress, and unfitted structured meshes may have less influence on desired $\tenC^\textnormal{eff}$ comparing to $\vecu$ and $\matVarep$. Moreover, those methods are not limited in linear elasticity, versatile applications such as nonlinear elasticity \cite{Moulinec1998,Michel2001},  piezoelectricity \cite{Brenner2009}, damage and fracture mechanics \cite{Zhu2015,Chai2020}, polycrystalline materials \cite{Segurado2018} can be found in literature. We strongly recommend a recent review article \cite{Schneider2021} as an exhaustive reference for historical developments and the current state of the art of FFT-base homogenization methods.


Willot's scheme is another popular discretization method of the Lippmann-Schwinger equation \cite{Willot2015}. The main idea in Willot's scheme is replacing $2\pi \iu \vecxi$ the gradient operator $\nabla$ in the \emph{Fourier space} with
\begin{equation}\label{eq:WL k_N}
\veck_N[\vecxi]=\frac{\iu N}{4}\prod_{m=1}^d\Braces{\exp\Brackets{2\pi\iu\frac{\vecxi_m}{N}}+1}\SquareBrackets{\tan\Brackets{\frac{\pi\vecxi_1}{N}},\tan\Brackets{\frac{\pi\vecxi_2}{N}},\tan\Brackets{\frac{\pi\vecxi_3}{N}}}
\end{equation}
that is derived from a finite difference stencil. If $\tenC^\textnormal{ref}$ is isotropic with the Lam\'{e} coefficients $(\lambda^0,\mu^0)$, the closed form of $\widehat{\tensorsym{\Gamma}^0_\textnormal{W}}$ is
\[
\begin{aligned}
[\widehat{\tensorsym{\Gamma}^0_\textnormal{W}}]_{mnpq}[\vecxi]=&\frac{1}{4\mu^0\Seminorm{\veck_N}^2}\left\{ \SquareBrackets{\veck_N}_m\SquareBrackets{\widebar{\veck_N}}_q\delta_{np}+\SquareBrackets{\veck_N}_n\SquareBrackets{\widebar{\veck_N}}_q\delta_{mp}\right. \\
\\&\qquad\qquad\left.+\SquareBrackets{\veck_N}_m\SquareBrackets{\widebar{\veck_N}}_p\delta_{nq}+\SquareBrackets{\veck_N}_n\SquareBrackets{\widebar{\veck_N}}_p\delta_{mq}\right\}\\
&-\frac{\Brackets{\lambda^0+\mu^0}\SquareBrackets{\veck_N}_m\SquareBrackets{\widebar{\veck_N}}_n\SquareBrackets{\veck_N}_p\SquareBrackets{\widebar{\veck_N}}_q}{\mu^0\Brackets{\lambda^0+2\mu^0}\Seminorm{\veck_N}^4},
\end{aligned}
\]
and this is the major difference from the basic scheme. Take
\[
\mathcal{F}_{N\shortminus}\coloneqq\Braces{\vecxi\in\Z^d:-N/2<\vecxi_m<N/2,\forall 1\leq m\leq d},
\]
and Willot's scheme states as \cref{alg:WL}.

\begin{algorithm}
\caption{Willot's scheme}
\label{alg:WL}
\begin{algorithmic}[1]
\STATE{Set $\tenC^\textnormal{ref}$ and $\matE$, initiate a tensor variable $\matVarep_N^\star \in l(\I_N;\Sym^d)$ with $\matVarep_N^\star[\vecI]=\matE$ for all $\vecI \in \I_N$}
\WHILE{not meet the convergence criterion}
\STATE{Evaluate $\matrixsym{\tau}_N^\star[\vecI] = \Brackets{\tenC_N^\star[\vecI]-\tenC^\textnormal{ref}}:\matVarep_N^\star[\vecI]$ for all $\vecI \in \I_N$}
\STATE{Perform a FFT $\widehat{\matrixsym{\tau}_N^\star}=\FFT\Brackets{\matrixsym{\tau}_N^\star}$}
\STATE{Initiate a temporary tensor variable $\widehat{\matep_N^\star}$, evaluate $\widehat{\matep_N^\star}[\vecZero]=\matE$, $\widehat{\matep_N^\star}[\vecxi]=-\widehat{\tensorsym{\Gamma}^0_\textnormal{W}}[\vecxi]:\widehat{\matrixsym{\tau}_N^\star}[\vecxi]$ for all $\vecxi \in  \mathcal{F}_{N\shortminus}\setminus \Braces{\vecZero}$, and $\widehat{\matep_N^\star}[\vecxi]=\vecZero$ for all $\vecxi\in\mathcal{F}_N\setminus \mathcal{F}_{N\shortminus}$}
\STATE{Perform an inverse FFT $\matep_N^\star=\FFT^{-1}\Brackets{\widehat{\matep_N^\star}}$}
\STATE{Calculate the convergence indicator via $\matVarep_N^\star$ and $\matep_N^\star$ and $\matVarep_N^\star \leftarrow \matep_N^\star$}
\ENDWHILE
\RETURN $\matVarep_N^\star$ and $N^{-d}\sum_{\vecI\in \I_N} \tenC_N^\star[\vecI]:\matVarep_N^\star[\vecI]$
\end{algorithmic}
\end{algorithm}

It has been numerically demonstrated that convergences of Moulinec-Suquent's scheme will deteriorate as the contrast ratio of $\tenC_N$ grows, and an extreme example shows that the basic scheme indeed fails to converge for a porous material \cite{Schneider2016}, while Willot's scheme stays a stable convergence history in high contrast settings \cite{Willot2015}. Meanwhile, computational overheads of the basic and Willot's schemes are almost same. Those advantages make Willot's scheme become another standard method in discretizing the Lippmann-Schwinger equation \cref{eq:LS}.

Interestingly, it is pointed in \cite{Schneider2017} that Willot's scheme is related to a \emph{reduced integration} variational problem: find $\vecu_N \in V_N^d$ with $\AngleBrackets{\vecu_N}=\vecZero$ such that for any $\vecv_N \in V_N^d$,
\[
N^{-d}\sum_{\vecI\in \I_N} \nablaS \vecv_N(\vecx_\vecI):\tenC_N^\star[\vecI]:\nablaS \vecu_N(\vecx_\vecI)=-N^{-d}\sum_{\vecI\in \I_N} \nablaS \vecv_N(\vecx_\vecI):\tenC_N^\star[\vecI]:\matE,
\]
where $V_N$ is the trilinear Lagrange finite element space of $H^1_\#(Y)$. The reduced integration technique is significantly efficient in constructing stiffness matrices comparing to \emph{full integration}, while the latter needs evaluations on all the Gaussian quadrature points ($8$ for trilinear elements) in each element. However, due to that the stiffness matrix derived by reduced integration may be singular, there exist so-called ``hourglassing'' or ``zero-energy'' modes \cite{Koh1987,Pugh1978}. For example when $N$ is an even integer, it is easy to find nonzero $\vecv_N$ such that $\nablaS \vecv_N(\vecx_\vecI)=\vecZero$ for all $\vecI$, and this $\vecv_N$ is closely related with the frequency $\vecxi=[-N/2,-N/2,-N/2]$, which also implies there does \emph{not} exist a positive constant $c$ independent of $N$ satisfying
\[
c\Norm{\vecv_N}_{H^1(Y)}^2\leq N^{-d}\sum_{\vecI\in \I_N} \nablaS \vecv_N(\vecx_\vecI):\tenC_N^\star[\vecI]:\nablaS \vecv_N(\vecx_\vecI).
\]

In \cite{Schneider2017}, Schneider et al. proposed a novel scheme by replacing reduced integration in Willot's scheme with full integration, i.e., find $\vecu_N \in V_N^d$ with $\AngleBrackets{\vecu_N}=\vecZero$ such that for any $\vecv_N \in V_N^d$,
\[
\begin{aligned}
&(2N)^{-d}\sum_{\vecI\in \I_N}\sum_{\vecb\in \Braces{-1,1}^d} \nablaS \vecv_N(\vecx_\vecI^\vecb):\tenC_N^\star[\vecI]:\nablaS \vecu_N(\vecx_\vecI^\vecb)=\AngleBrackets{\nablaS \vecv_N : \tenC_N : \nablaS \vecu_N}\\
=&-N^{-d}\sum_{\vecI\in \I_N} \nablaS \vecv_N(\vecx_\vecI):\tenC_N^\star[\vecI]:\matE= -\AngleBrackets{\nablaS \vecv_N : \tenC_N} : \matE,
\end{aligned}
\]
where $\vecx_\vecI^\vecb$ is the Gaussian quadrature point on the element $Y_\vecI$ determined by symbols of $\vecb \in \Braces{-1,1}^d$. The FFT technique is utilized in solving reference problem: find $\vecw_N \in V_N^d$ with $\AngleBrackets{\vecw_N}=\vecZero$ such that for any $\vecv_N \in V_N^d$,
\[
\begin{aligned}
&(2N)^{-d}\sum_{\vecI\in \I_N}\sum_{\vecb\in \Braces{-1,1}^d} \nablaS \vecv_N(\vecx_\vecI^\vecb):\tenC^\textnormal{ref}:\nablaS \vecu_N(\vecx_\vecI^\vecb) = \AngleBrackets{\nablaS \vecv_N: \tenC^\textnormal{ref}:\nablaS \vecw_N} \\
=& (2N)^{-d}\sum_{\vecI\in \I_N}\sum_{\vecb\in \Braces{-1,1}^d} \nablaS \vecv_N(\vecx_\vecI^\vecb):\matF_N^{\vecb,\star}[\vecI],
\end{aligned}
\]
where $\matF_N^{\vecb,\star}[\vecI] \in \Sym^d$. The motivation is converting $\nablaS \vecv_N(\vecx_\vecI^\vecb)$ into $\veck_N^\vecb[\vecxi] \otimesS \widehat{\vecv_N^\star}[\vecxi]$ via the Discrete Fourier Transform (DFT), where $\otimesS$ is the symmetric Kronecker product and $\vecv_N^\star[\vecI]=\vecv_N(\vecI/N)$. Combining the symmetric relations of $\tenC^\textnormal{ref}$, we can hence derive an explicit formula of $\vecw_N(\vecI/N)$ in the Fourier space as $\matB_N^{-1}[\vecxi]\cdot \vectorsym{\zeta}_N[\vecxi]$ for $\vecxi\neq \vecZero$, where $\matB_N=2^{-d}\sum_\vecb \widebar{\veck_N^\vecb} \cdot \tenC^\textnormal{ref}\cdot \veck_N^\vecb$ and $\vectorsym{\zeta}_N = 2^{-d}\sum_\vecb \widehat{\matF_N^{\vecb,\star}} \cdot \widebar{\veck_N^\vecb}$. The scheme is summarized in \cref{alg:FEM}, we called it ``the FEM scheme'' for it is essentially a FEM with the FFT acting as a preconditioner \cite{Saad2003}.

\begin{algorithm}
\caption{The FEM scheme}
\label{alg:FEM}
\begin{algorithmic}[1]
\STATE{Set $\tenC^\textnormal{ref}$ and $\matE$, initiate a tensor variable $\vecu_N^\star \in l(\I_N;\R^d)$ with $\vecu_N^\star[\vecI]=\vecZero$ for all $\vecI \in \I_N$}
\WHILE{not meet the convergence criterion}
\STATE{Evaluate $\matrixsym{\tau}^{\vecb,\star}_N[\vecI] = \Brackets{\tenC_N^\star[\vecI]-\tenC^\textnormal{ref}}:\Brackets{\nablaS \vecu_N(\vecx_\vecI^\vecb)+\matE}$ for all $\vecI \in \I_N$ and $\vecb \in \Braces{-1,1}^d$, where $\nablaS \vecu_N(\vecx_\vecI^\vecb)$ is obtained by the trilinear interpolation of $\vecu^\star_N$}
\STATE{Perform FFTs $\widehat{\matrixsym{\tau}_N^{\vecb, \star}}=\FFT\Brackets{\matrixsym{\tau}_N^{\vecb,\star}}$ for all $\vecb \in \Braces{-1,1}^d$}
\STATE{Initiate a temporary tensor variable $\widehat{\vecv_N^\star}$ with $\widehat{\vecv_N^\star}[\vecZero]=\vecZero$ and evaluate $\widehat{\vecv_N^\star}[\vecxi]=\matB_N^{-1}[\vecxi] \cdot \vectorsym{\zeta}_N[\vecxi]$ for nonzero $\vecxi$, where $\matB_N=2^{-d}\sum_\vecb \widebar{\veck_N^\vecb} \cdot \tenC^\textnormal{ref}\cdot \veck_N^\vecb$ and $\vectorsym{\zeta}_N = 2^{-d}\sum_\vecb \widehat{\matrixsym{\tau}_N^{\vecb,\star}} \cdot \widebar{\veck_N^\vecb}$}
\STATE{Perform an inverse FFT $\vecv_N^\star=\FFT^{-1}\Brackets{\widehat{\vecv_N^\star}}$}
\STATE{Calculate the convergence indicator via $\vecu_N^\star$ and $\vecv_N^\star$ and $\vecu_N^\star \leftarrow \vecv_N^\star$}
\ENDWHILE
\RETURN $\vecu_N^\star$ and $N^{-d}\sum_{\vecI\in \I_N} \tenC_N^\star[\vecI]:\Brackets{\nablaS\vecu_N(\vecx_\vecI)+\matE}$
\end{algorithmic}
\end{algorithm}

Currently, most researches on FFT-based homogenization are focused on developing fast algorithms to accelerate convergences of iterating processes and designing schemes to stabilize performances on high/infinite contrast materials \cite{Eyre1999,Michel2001,Zeman2010,Eloh2019,Schneider2020}, while few of them consider the convergences of those methods with respect to the spatial resolution $N$, which could be viewed as an analogy of $h$-estimate theories in FEMs \cite{Ciarlet1991,Brenner2008}. To our knowledge, the work by Schneider \cite{Schneider2015} is only published one which seriously discusses such a question. In his article, a priori error estimate of Moulinec-Suquet's scheme is formulated with the trigonometric interpolation operator \cite{Zygmund1968}. However, because of involving pointwise evaluations, the trigonometric interpolation operator is to some certain ``incompatible'' with Lebesgue integrable functions.

Since the main objective in this article is studying convergences with respect to spatial resolutions, it is necessary to clarify the relation between the provided information $\tenC_N^\star$ and the ground truth $\tenC$. As mentioned previously, the ground truth is hidden from the scheme part, we hence cannot hold an optimistic anticipation on convergence rates like immersed FEMs \cite{Li2003,Chen2009} or unfitted FEMs \cite{Burman2014,Huang2017,Chen2021}. The assumptions for $\tenC$ and $\tenC_N^\star$ is elucidated as follows.

\paragraph{Assumption A} The RVE domain $Y$ consists of $(M+1)$ Lipschitz subdomains that are labeled by $D_0,D_1,\dots,D_M$, and $\tenC(\vecx)$ only takes a constant tensor $\tensorsym{\Lambda}_l$ in each subdomain $D_l$.

\paragraph{Assumption B} The constant tensors $\Braces{\tensorsym{\Lambda}_l}_{0\leq l\leq M}$ satisfy properties: (symmetricity) for any $0\leq l\leq M$,
\[
\SquareBrackets{\tensorsym{\Lambda}_l}_{mnpq}=\SquareBrackets{\tensorsym{\Lambda}_l}_{nmpq}=\SquareBrackets{\tensorsym{\Lambda}_l}_{mnqp}=\SquareBrackets{\tensorsym{\Lambda}_l}_{pqmn}, \forall 1\leq m,n,p,q\leq d;
\]
(coercivity) there exist positive constants $\Lambda'$ and $\Lambda''$ such that for any $\matF\in \Sym^d$ and $0\leq l\leq M$,
\[
\Lambda' \matF : \matF \leq \matF : \tensorsym{\Lambda}_l:\matF \leq \Lambda'' \matF : \matF.
\]

\paragraph{Assumption C} For any $\vecI \in \I_N$, if $Y_\vecI \subset D_l$ then $\tenC_N^\star[\vecI]=\tensorsym{\Lambda}_l$, else $\tenC_N^\star[\vecI]$ will be arbitrarily chosen from $\Braces{\tensorsym{\Lambda}_{l'}:Y_\vecI \cap D_{l'} \neq \varnothing}$.

In some cases, we will use the notation $C(p_1, p_2, \dots, p_n)$ to represent a \emph{positive} constant $C$ which depends on $p_1,p_2,\dots, p_n$.

Here are the contributions and structure of this article:

\begin{enumerate}
\item In \cref{sec:MS}, we rebuild the theories in \cite{Schneider2015} on the convergence of Moulinec-Suquet's scheme by introducing a new operator which is will-defined for Lebesgue integrable functions.
\item In \cref{sec:WL}, we prove the convergence of the effective coefficients obtained by Willot's scheme.
\item In \cref{sec:FEM}, by assuming some suitable regularities and combining several priori estimates in FEM theories, we present convergence \emph{rates} of the solution and the effective coefficients derived by the FEM scheme.
\end{enumerate}

\section{Convergence of Moulinec-Suquent's scheme} \label{sec:MS}

Let $S_N$ be a \emph{complex} trigonometric polynomial space
\[
\Braces{f(\vecx)=\sum_{\vecxi\in \mathcal{F}_N}c[\vecxi]\exp(2\pi \iu \vecxi\cdot \vecx): c[\vecxi]\in \mathbb{C}},
\]
and $\Proj_N:L^2(Y;\Cfield)\rightarrow L^2(Y;\Cfield)$ be the orthogonal projection onto $S_N$ \cite{Conway1990}. Take $G_N(\vecx)\coloneqq N^d \mathbb{1}_{C_N}(\vecx)$ where the set $C_N$ is the cube with $\vecZero$ as its center and $1/N$ as its edge length. In the following analysis, for a function $f\in L^2(Y;\Cfield)$, the convolution $G_N*f$ should be understood as applying $G_N$ to the \emph{periodic extension} of $f$, which induces that
\[
G_N*f(\vecx) =\sum_{\vecxi \in \Z^d} \widehat{G_N}[\vecxi]\widehat{f}[\vecxi]\exp(2\pi\iu \vecxi\cdot \vecx),
\]
where
\begin{equation}
\widehat{G_N}[\vecxi]=\left\{
\begin{aligned}
&\prod_{m=1}^{d}\frac{N\sin\Brackets{\frac{\pi \vecxi_m}{N}}}{\pi \vecxi_m}, & \text{if}\ \prod_{m=1}^d\vecxi_m \neq 0, \\
&1, &\text{otherwise}.
\end{aligned}
\right.
\end{equation}

It is easy to see that $(2/\pi)^d \leq \widehat{G_N}[\vecxi] \leq 1$ for $\vecxi \in \mathcal{F}_N$, and we can hence define an operator $\Q_N$ that plays an essential role in our analysis:
\begin{definition}
For any $f\in L^2(Y;\Cfield)$,
\[
\Q_N f(\vecx) = \sum_{\vecxi\in \mathcal{F}_N} \widehat{G_N}^{-1}[\vecxi]\exp\Brackets{-\pi \iu \frac{\vecxi \cdot \vectorsym{1}}{N}} \widehat{f^\star}[\vecxi] \exp(2\pi \iu \vecxi\cdot \vecx),
\]
where $\vectorsym{1}=[1,1,1]$ and
\[
f^\star[\vecI] = G_N*f(\vecx_\vecI) = \fint_{Y_\vecI} f(\vecx) \dx \vecx.
\]
\end{definition}

We can show the following facts of $\Q_N$:
\begin{proposition} \label{prop:QN}
The following statements hold true for $\Q_N$
\begin{itemize}
\item Let $f\in L^2(Y;\Cfield)$, then $\Q_N f\in S_N$, and if $f\in S_N$ then $\Q_N f = f$;
\item Let $f\in L^2(Y;\Cfield)$, then $\Norm{\Q_N f}_{L^2(Y)} \leq \Brackets{\pi/2}^{d}\Norm{f}_{L^2(Y)}$ and
\[
\Norm{f-\Q_N f}_{L^2(Y)} \leq \Braces{1+\Brackets{\pi/2}^d}\Norm{f-\Proj_Nf}_{L^2(Y)};
\]
\item for a series of $\Braces{f_N}\subset L^2(Y;\Cfield)$ with $f_N \rightharpoonup f_\infty$, then $\Q_N f_N \rightharpoonup f_\infty$.
\end{itemize}
\end{proposition}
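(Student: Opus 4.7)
The plan is to handle the three items in order, with the first two being short Fourier-side computations and the third being the substantive part.

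For item (1), $\Q_N f\in S_N$ is immediate from the defining formula, which is a finite trigonometric sum over $\mathcal{F}_N$. To see that $\Q_N f=f$ when $f\in S_N$, I would write $f=\sum_{\vecxi\in\mathcal{F}_N}c[\vecxi]\exp(2\pi\iu\vecxi\cdot\vecx)$ and evaluate $f^\star[\vecI]=\fint_{Y_\vecI}f$ as a product of one-dimensional integrals; this produces exactly $\widehat{f^\star}[\vecxi]=c[\vecxi]\widehat{G_N}[\vecxi]\exp(\pi\iu\vecxi\cdot\vectorsym{1}/N)$, so substituting into the definition of $\Q_N$ cancels the $\widehat{G_N}^{-1}$ and half-shift phase and returns $f$. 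For item (2), the pointwise bound $\widehat{G_N}[\vecxi]\geq(2/\pi)^d$ on $\mathcal{F}_N$, discrete Parseval, and Jensen's inequality on $f^\star[\vecI]=\fint_{Y_\vecI}f$ give
\[
\Norm{\Q_N f}_{L^2(Y)}^2\leq(\pi/2)^{2d}\sum_{\vecxi\in\mathcal{F}_N}|\widehat{f^\star}[\vecxi]|^2=(\pi/2)^{2d}N^{-d}\sum_{\vecI\in\I_N}|f^\star[\vecI]|^2\leq(\pi/2)^{2d}\Norm{f}_{L^2(Y)}^2.
\]
For the projection estimate, applying item (1) to $\Proj_N f\in S_N$ gives $\Q_N\Proj_N f=\Proj_N f$, hence $f-\Q_N f=(f-\Proj_N f)-\Q_N(f-\Proj_N f)$; the triangle inequality combined with the norm bound then produces the factor $1+(\pi/2)^d$.

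Item (3) is the main technical point, and the plan is to reduce it to strong convergence of Fourier coefficients, frequency by frequency, on a dense subspace of test functions. Since $f_N\rightharpoonup f_\infty$ forces $\sup_N\Norm{f_N}_{L^2}<\infty$, the norm bound from item (2) gives a uniform bound on $\Norm{\Q_N f_N}_{L^2}$, so by a standard $\epsilon$-density argument it suffices to prove $\int_Y(\Q_N f_N)(\vecx)\overline{h(\vecx)}\dx\vecx\to\int_Y f_\infty(\vecx)\overline{h(\vecx)}\dx\vecx$ for every trigonometric polynomial $h$, say $h\in S_M$. Fixing $M$ and letting $N\geq M$, Parseval reduces this inner product to the finite sum $\sum_{\vecxi\in\mathcal{F}_M}\widehat{\Q_N f_N}[\vecxi]\overline{\widehat{h}[\vecxi]}$, and it then suffices to show $\widehat{\Q_N f_N}[\vecxi]\to\widehat{f_\infty}[\vecxi]$ for each fixed $\vecxi\in\mathcal{F}_M$.

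The key step is to rewrite
\[
\widehat{f_N^\star}[\vecxi]=\int_Y f_N(\vecx)\,\phi_N^\vecxi(\vecx)\dx\vecx,\quad \phi_N^\vecxi(\vecx)=\exp(-2\pi\iu\vecxi\cdot\vecI(\vecx)/N),
\]
where $\vecI(\vecx)$ is the index of the cell $Y_\vecI$ containing $\vecx$. For each fixed $\vecxi$, the piecewise-constant $\phi_N^\vecxi$ converges uniformly to $\exp(-2\pi\iu\vecxi\cdot\vecx)$ at rate $O(|\vecxi|/N)$. The splitting
\[
\widehat{f_N^\star}[\vecxi]-\widehat{f_\infty}[\vecxi]=\int_Y f_N(\vecx)\bigl(\phi_N^\vecxi(\vecx)-\exp(-2\pi\iu\vecxi\cdot\vecx)\bigr)\dx\vecx+\int_Y(f_N-f_\infty)(\vecx)\exp(-2\pi\iu\vecxi\cdot\vecx)\dx\vecx
\]
then disposes of both pieces: the first by the uniform $L^2$ bound on $f_N$ and the $L^\infty$ (hence $L^2$) convergence of $\phi_N^\vecxi$, and the second by the weak convergence of $f_N$ tested against the fixed $L^2$ function $\exp(-2\pi\iu\vecxi\cdot\vecx)$. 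Combined with $\widehat{G_N}^{-1}[\vecxi]\exp(-\pi\iu\vecxi\cdot\vectorsym{1}/N)\to 1$ for fixed $\vecxi$, this yields $\widehat{\Q_N f_N}[\vecxi]\to\widehat{f_\infty}[\vecxi]$ and closes the argument. The only genuine obstacle is spotting the right discretized exponential $\phi_N^\vecxi$; once the discrete Fourier coefficient is rewritten as a continuous $L^2$-pairing, the rest is a standard weak/strong interaction.
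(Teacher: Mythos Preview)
Your proposal is correct and follows essentially the same route as the paper's proof: items (1) and (2) are handled by the same Fourier-side computations (Parseval, Jensen, and the identity $\Q_N\Proj_N f=\Proj_N f$), and item (3) uses exactly the same splitting of $\widehat{f_N^\star}[\vecxi]$ into a weak-convergence term and a piecewise-constant-versus-exponential error term bounded by $O(|\vecxi|/N)\sup_{N'}\Norm{f_{N'}}_{L^2}$. The only difference is that you make the density reduction (uniform $L^2$ bound on $\Q_N f_N$ plus testing against trigonometric polynomials) explicit, whereas the paper states this step without justification.
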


\begin{proof}
From the definition of $S_N$, it follows that $\Q_N f\in S_N$. Moreover, if $f(\vecx)=\sum_{\vecxi\in \mathcal{F}_N} \widehat{f}[\vecxi]\exp(2\pi\iu \vecxi\cdot \vecx)$, we have
\[
\begin{aligned}
f^\star[\vecI]&=\sum_{\vecxi\in \mathcal{F}_N} \widehat{G_N}[\vecxi] \widehat{f}[\vecxi]\exp(2\pi\iu \vecxi\cdot \vecx_\vecI)\\
&=\sum_{\vecxi\in \mathcal{F}_N} \widehat{G_N}[\vecxi] \widehat{f}[\vecxi] \exp\Brackets{\pi \iu \frac{\vecxi \cdot \vectorsym{1}}{N}} \exp\Brackets{2\pi \iu \frac{\vecxi\cdot \vecI}{N}} \\
&=\sum_{\vecxi\in \mathcal{F}_N} \widehat{f^\star}[\vecxi] \exp\Brackets{2\pi \iu \frac{\vecxi\cdot \vecI}{N}}.
\end{aligned}
\]
Then $\Q_N f=f$ holds by taking $\widehat{f^\star}[\vecxi]=\widehat{G_N}[\vecxi] \widehat{f}[\vecxi] \exp\Brackets{\pi \iu \frac{\vecxi \cdot \vectorsym{1}}{N}}$ into the definition of $\Q_N f$.

From the fact $(2/\pi)^d \leq \widehat{G_N}[\vecxi] \leq 1$ for $\vecxi \in \mathcal{F}_N$, we have
\[
\begin{aligned}
\Norm{\Q_N f}_{L^2(Y)}^2&=\sum_{\vecxi\in \mathcal{F}_N} \Seminorm{\widehat{G_N}^{-1}[\vecxi]}^2 \Seminorm{\widehat{f^\star}[\vecxi]}^2 &\qquad ~  \\
& \leq \Brackets{\frac{\pi}{2}}^{2d} \sum_{\vecxi\in \mathcal{F}_N} \Seminorm{\widehat{f^\star}[\vecxi]}^2 &\qquad ~\\
& = \Brackets{\frac{\pi}{2}}^{2d} \frac{1}{N^d} \sum_{\vecI\in \I_N} \Seminorm{f^\star[\vecI]}^2 &\qquad (\text{Parseval's theorem}) \\
& = \Brackets{\frac{\pi}{2}}^{2d} \frac{1}{N^d} \sum_{\vecI\in \I_N} \Seminorm{\fint_{Y_\vecI} f(\vecx)\dx \vecx}^2 &\qquad (\text{by the definition of }f^\star[\vecI])\\
& \leq  \Brackets{\frac{\pi}{2}}^{2d} \frac{1}{N^d} \sum_{\vecI\in \I_N} \fint_{Y_\vecI} \Seminorm{ f(\vecx)}^2 \dx \vecx  &\qquad (\text{Jensen's inequality}) \\
& = \Brackets{\frac{\pi}{2}}^{2d} \Norm{f}^2_{L^2(Y)}.
\end{aligned}
\]
The estimate of $\Norm{f-\Q_Nf}_{L^2(Y)}$ is a direct corollary via
\[
f-\Q_N f = f-\Proj_Nf+\Proj_Nf-\Q_N f= \Brackets{f-\Proj_Nf} + \Q_N\Brackets{\Proj_Nf-f}.
\]

For checking the weak convergence of $\Braces{\Q_N f_N}$, we only need to show that
\[
\AngleBrackets{\exp(-2\pi\iu\vecxi'\cdot \vecx)\Q_Nf_N}\rightarrow \AngleBrackets{\exp(-2\pi\iu\vecxi'\cdot \vecx)f_\infty}
\]
for any $\vecxi'\in \Z^d$. Take a large enough $N$ such that $\vecxi'\in \mathcal{F}_N$,
\[
\AngleBrackets{\exp(-2\pi\iu\vecxi'\cdot \vecx)\Q_Nf_N}=\widehat{G_N}^{-1}[\vecxi']\exp\Brackets{-\pi \iu \frac{\vecxi'\cdot \vecOne}{N}} \widehat{f_N^\star}[\vecxi'],
\]
and we are left to show $\widehat{f_N^\star}[\vecxi']\rightarrow \widehat{f_\infty}[\vecxi]$. By a direct calculation,
\[
\begin{aligned}
\widehat{f_N^\star}[\vecxi']=&N^{-d}\sum_{\vecI\in \I_N} f^\star_N[\vecI]\exp\Brackets{-2\pi\iu\frac{\vecI\cdot\vecxi'}{N}}=\sum_{\vecI\in \I_N} \int_{Y_\vecI} f_N(\vecx)\dx \vecxi \exp\Brackets{-2\pi\iu\frac{\vecI\cdot\vecxi'}{N}} \\
=&\AngleBrackets{f_N(\vecx)\exp(-2\pi\iu\vecx\cdot\vecxi')} \\
&+ \sum_{\vecI\in \I_N} \int_{Y_\vecI} f_N(\vecx)\Braces{\exp\Brackets{-2\pi\iu\frac{\vecI\cdot\vecxi'}{N}}-\exp(-2\pi\iu\vecx\cdot\vecxi')}\dx \vecx \\
\coloneqq& J_N'+J_N''.
\end{aligned}
\]
We have $J_N'\rightarrow \AngleBrackets{f_\infty\exp(-2\pi\iu\vecx\cdot\vecxi')}$ due to the weak convergence of $\Braces{f_N}$, and it is easy to see that
\[
\Seminorm{J_N''}\leq C(d)\frac{\Seminorm{\vecxi'}}{N} \Norm{f_N}_{L^2(Y)}\leq C(d)\frac{\Seminorm{\vecxi'}}{N} \sup_{N'}\Norm{f_{N'}}_{L^2(Y)},
\]
which implies $J_N''\rightarrow 0$ as $N\rightarrow \infty$. We hence complete the proof of $\Q_N f_N\rightharpoonup f_\infty$.
\end{proof}

\begin{remark}
The operator $\Q_N$ defined here is a natural modification of the trigonometric interpolation in \cite{Schneider2015}. Generally, the coefficients $\tenC$ and strain field $\matVarep$ are discontinuous, while the trigonometric interpolation requires pointwise evaluations in the domain $Y$, which is problematic for Lebesgue integrable functions.
\end{remark}

The next lemma reveals the relation between Moulinec-Suquent's scheme (i.e., \cref{alg:MS}) and the original variational problem \cref{eq:varia form}.
\begin{lemma} \label{lem:MS equi}
The following statements are equivalent:
\begin{enumerate}
\item There exists $\matVarep_N^\star$ such that
\begin{equation}\label{eq:MS form1}
\widehat{\matVarep_N^\star}[\vecxi]=\left\{
\begin{aligned}
&-\widehat{\tensorsym{\Gamma}^0}[\vecxi]:\widehat{\matrixsym{\tau}_N^\star}[\vecxi], & \forall \vecxi\neq \vecZero, \\
& \matE, & \vecxi=\vecZero,
\end{aligned}
\right.
\end{equation}
where $\matrixsym{\tau}_N^\star[\vecI]=\Brackets{\tenC_N^\star[\vecI]-\tenC^\textnormal{ref}}:\matVarep_N^\star[\vecI]$ for all $\vecI \in \I_N$.

\item There exists $\vecu_N \in S_N^d$ with $\AngleBrackets{\vecu_N}=\vecZero$, such that for any $\vecv_N \in S_N^d$
\begin{equation}\label{eq:MS form2}
\begin{aligned}
&N^{-d}\sum_{\vecI\in \I_N} \fint_{Y_\vecI}  \nabla^s\widebar{\vecv_N} \dx \vecx : \tenC_N^\star[\vecI] : \fint_{Y_\vecI} \nabla^s \vecu_N \dx \vecx\\
=&-N^{-d}\sum_{\vecI\in \I_N} \fint_{Y_\vecI} \nabla^s\widebar{\vecv_N} \dx \vecx : \tenC_N^\star[\vecI] : \matE.
\end{aligned}
\end{equation}

\item There exists $\vecu_N \in S_N^d$ with $\AngleBrackets{\vecu_N}=\vecZero$, such that for any $\vecv \in H^1_\#(Y;\Cfield^d)$
\begin{equation}\label{eq:MS form3}
\AngleBrackets{\mathcal{R}_N\Brackets{\nablaS\widebar{\vecv}}:\Q_N\Brackets{\tenC_N:\nablaS \vecu_N}}=-\AngleBrackets{\mathcal{R}_N\Brackets{\nablaS\widebar{\vecv}}:\Q_N\Brackets{\tenC_N:\matE}},
\end{equation}
where the operator $\mathcal{R}_N$ is defined as $\mathcal{R}_N f=G_N*G_N*f$.
\end{enumerate}
\end{lemma}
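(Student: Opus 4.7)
The strategy is to prove the cyclic chain $(1)\Rightarrow(2)\Rightarrow(3)\Rightarrow(1)$ by passing through Fourier series. The key bridge identity is: for any $\vecu_N\in S_N^d$ with $\AngleBrackets{\vecu_N}=\vecZero$, the cell-average field $\vecI\mapsto\fint_{Y_\vecI}\nablaS\vecu_N\dx\vecx$ has discrete Fourier coefficient $\widehat{G_N}[\vecxi]\exp(\pi\iu\vecxi\cdot\vecOne/N)\cdot 2\pi\iu\vecxi\otimesS\widehat{\vecu_N}[\vecxi]$ at each $\vecxi\in\mathcal{F}_N\setminus\{\vecZero\}$, which follows from $G_N*\exp(2\pi\iu\vecxi\cdot\vecx)(\vecx_\vecI)=\widehat{G_N}[\vecxi]\exp(\pi\iu\vecxi\cdot\vecOne/N)\exp(2\pi\iu\vecxi\cdot\vecI/N)$. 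Since $\widehat{G_N}[\vecxi]\geq(2/\pi)^d$ on $\mathcal{F}_N$ by \cref{prop:QN}, this multiplier is invertible, so we can move freely between $\matVarep_N^\star$ in (1) and the cell averages of $\nablaS\vecu_N+\matE$ in (2).

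For $(1)\Leftrightarrow(2)$, inspection of the closed form of $\widehat{\tensorsym{\Gamma}^0}[\vecxi]$ shows that $\widehat{\tensorsym{\Gamma}^0}[\vecxi]:\matF=\vecxi\otimesS\vectorsym{a}_\vecxi$ for every $\matF\in\Sym^d$, with $\vectorsym{a}_\vecxi$ a vector linear in $\matF$. Hence (1) forces every nonzero Fourier mode of $\matVarep_N^\star$ to have the form $\vecxi\otimesS\vectorsym{a}_\vecxi$, so I can invert the bridge multiplier modewise and assemble $\vecu_N\in S_N^d$ with $\AngleBrackets{\vecu_N}=\vecZero$ satisfying $\matVarep_N^\star[\vecI]=\fint_{Y_\vecI}(\nablaS\vecu_N+\matE)\dx\vecx$. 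Testing (2) against a single Fourier monomial $\vecv_N=\exp(2\pi\iu\vecxi'\cdot\vecx)\vecb$ and applying discrete Parseval reduces (2) to the modewise identity $(\vecxi'\otimesS\widebar{\vecb}):(\tenC^\textnormal{ref}:\widehat{\matVarep_N^\star}[\vecxi']+\widehat{\matrixsym{\tau}_N^\star}[\vecxi'])=0$, which is precisely the defining variational property $(\vecxi'\otimesS\vecb):\tenC^\textnormal{ref}:\widehat{\tensorsym{\Gamma}^0}[\vecxi']:\tau=(\vecxi'\otimesS\vecb):\tau$ of $\widehat{\tensorsym{\Gamma}^0}$ combined with (1). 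The converse direction is symmetric: the same modewise identity, together with the fact that $\widehat{\matVarep_N^\star}[\vecxi']$ already carries a $\vecxi'\otimesS\vectorsym{w}$ factorization inherited from the construction, forces $\widehat{\matVarep_N^\star}[\vecxi']=-\widehat{\tensorsym{\Gamma}^0}[\vecxi']:\widehat{\matrixsym{\tau}_N^\star}[\vecxi']$ by invertibility of the Lam\'{e} acoustic tensor.

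For $(2)\Leftrightarrow(3)$, expand both factors of the continuous pairing in (3) in Fourier. The operator $\mathcal{R}_N$ acts as the multiplier $\widehat{G_N}^2[\vecxi]$, while $\Q_N(\tenC_N:\nablaS\vecu_N)$ has Fourier support inside $\mathcal{F}_N$, so the Parseval pairing receives contributions only from modes $\vecxi\in\mathcal{F}_N$ of $\vecv$; the product $\widehat{G_N}^2[\vecxi]\cdot\widehat{G_N}^{-1}[\vecxi]=\widehat{G_N}[\vecxi]$ together with the phases $\exp(\pm\pi\iu\vecxi\cdot\vecOne/N)$ reproduces exactly the Fourier symbol representing $\vecI\mapsto\fint_{Y_\vecI}\nablaS\widebar{\vecv}\dx\vecx$ when $\vecv\in S_N^d$. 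Transcribing back through discrete Parseval identifies (3) with (2) applied to the $S_N^d$-projection of $\vecv$, and running the computation in reverse extends (2) to arbitrary $\vecv\in H^1_\#(Y;\Cfield^d)$.

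The main obstacle I anticipate is the algebraic step at the heart of $(1)\Leftrightarrow(2)$: one must verify both that $-\widehat{\tensorsym{\Gamma}^0}[\vecxi]:\widehat{\matrixsym{\tau}_N^\star}[\vecxi]$ genuinely has the form $\vecxi\otimesS\vectorsym{a}$ required to be the Fourier coefficient of a symmetric gradient (so that $\vecu_N$ can be extracted), and that the bridge multiplier $\widehat{G_N}[\vecxi]\exp(\pi\iu\vecxi\cdot\vecOne/N)$ is invertible on $\mathcal{F}_N\setminus\{\vecZero\}$. The first is a direct algebraic check on the closed form of $\widehat{\tensorsym{\Gamma}^0}[\vecxi]$; the second is built into \cref{prop:QN}. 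Once those two facts are secured, what remains is careful bookkeeping of the phase factors $\exp(\pm\pi\iu\vecxi\cdot\vecOne/N)$ and of the two (continuous and discrete) Parseval identities.
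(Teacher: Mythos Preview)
Your proposal is correct and follows essentially the same route as the paper: both arguments pass through Fourier series, use the identity relating $\fint_{Y_\vecI}\nablaS\vecv_N\dx\vecx$ to $2\pi\iu\vecxi\otimesS\widehat{\vecv_N^\star}[\vecxi]$ (your ``bridge multiplier'' is exactly the relation $\widehat{\vecv_N^\star}[\vecxi]=\widehat{G_N}[\vecxi]\exp(\pi\iu\vecxi\cdot\vecOne/N)\widehat{\vecv_N}[\vecxi]$ that the paper uses implicitly), exploit the $\vecxi\otimesS\vectorsym{a}$ structure of $\widehat{\tensorsym{\Gamma}^0}[\vecxi]$ to reconstruct $\vecu_N$ from $\matVarep_N^\star$, and for $(2)\Leftrightarrow(3)$ unwind the Fourier multipliers of $\mathcal{R}_N$ and $\Q_N$ to recover the discrete Parseval sum. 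The only cosmetic difference is that the paper organizes $(1)\Leftrightarrow(2)$ by splitting $\tenC_N^\star=\tenC^\textnormal{ref}+\delta\tenC_N^\star$ and recognizing the Lippmann--Schwinger structure directly, whereas you test against a single Fourier monomial and reduce to a modewise identity; these are the same computation in different coordinates.
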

\begin{remark}
Because we have not imposed any regularity assumptions, any statement above may not be true. The purpose of this lemma is showing \cref{eq:MS form1,eq:MS form2,eq:MS form3} are equivalent transformations, and we can jump out the original scheme description \cref{alg:MS} and study a more ``mathematical'' formulation, i.e. \cref{eq:MS form3}.
\end{remark}
\begin{proof}
We first prove that statements $1$ and $2$ are equivalent. Take an expression of $\vecv_N$ as $\vecv_N(\vecx)=\sum_{\vecxi\in \mathcal{F}_N}\widehat{\vecv_N}[\vecxi]\exp(2\pi\iu \vecxi\cdot\vecx)$, we have
\[
\fint_{Y_\vecI} \nablaS \vecv_N \dx \vecx=\sum_{\vecxi\in \mathcal{F}_N} 2\pi \iu \vecxi \otimesS \widehat{\vecv_N^\star}[\vecxi]\exp\Brackets{2\pi\iu\frac{\vecxi\cdot \vecI}{N}},
\]
where $\widehat{\vecv_N^\star}$ is derived from the DFT of $\vecv_N^\star[\vecI]=\fint_{Y_\vecI}\vecv_N\dx \vecx$. By splitting $\tenC_N^\star=\tenC^\textnormal{ref}+\delta\tenC_N^\star$ and applying Parseval's theorem, we can convert \cref{eq:MS form2} into
\[
\begin{aligned}
&N^{-d}\sum_{\vecI\in \I_N} \fint_{Y_\vecI} \nabla^s \widebar{\vecv_N} \dx \vecx : \tenC^\textnormal{ref} : \fint_{Y_\vecI} \nabla^s \vecu_N \dx \vecx\\
=&\sum_{\vecxi\in \mathcal{F}_N} 4\pi^2 \Brackets{\vecxi \otimesS \widebar{\widehat{\vecv_N^\star}}}:\tenC^\textnormal{ref}:\Brackets{\vecxi \otimesS \widehat{\vecu_N^\star}} \\
=&-N^{-d}\sum_{\vecI\in \I_N} \fint_{Y_\vecI} \nabla^s \widebar{\vecv_N} \dx \vecx : \Braces{\delta\tenC^\star_N : \fint_{Y_\vecI} \nabla^s \vecu_N \dx \vecx+\tenC_N^\star:\matE} \\
=&\sum_{\vecxi\in \mathcal{F}_N} 2\pi \iu \Brackets{\vecxi \otimesS \widebar{\widehat{\vecv_N^\star}}}: \widehat{\delta \tenC_N^\star:\mattau_N^\star},
\end{aligned}
\]
where $\mattau_N^\star[\vecI]=\delta\tenC^\star_N : \Brackets{\fint_{Y_\vecI} \nabla^s \vecu_N \dx \vecx+\matE}$. Note this exactly repeats the derivation of the Lipmann-Schwinger equation \cref{eq:LS}, we hence show that the statement $2$ leads to $1$. Meanwhile, according the definition of $\widehat{\tenGamma^0}$, we can find $\widehat{\vecu_N^\star}$ such that $\widehat{\matVarep_N^\star}[\vecxi]=2\pi\iu\vecxi \otimesS \widehat{\vecu_N^\star}[\vecxi]$ for $\vecxi \neq \vecZero$. Then, via reverse steps in proving ($2\Rightarrow 1$), we complete the proof of equivalence between statements $1$ and $2$.

We then prove that statements $2$ and $3$ are equivalent. Without loss of generality, we assume $\vecv \in S_N^d$ in \cref{eq:MS form3} and $\vecv(\vecx)=\sum_{\vecxi\in \mathcal{F}_N} \widehat{\vecv}[\vecxi]\exp(2\pi\iu \vecxi\cdot \vecx)$. Then recalling the definitions of $\mathcal{R}_N$ and $\mathcal{Q}_N$, we have
\[
\begin{aligned}
&\AngleBrackets{\nablaS\Brackets{\mathcal{R}_N \widebar{\vecv}}:\Q_N\Brackets{\tenC_N:\nablaS \vecu_N}}\\
=&\sum_{\vecxi\in \mathcal{F}_N} -2\pi\iu \Brackets{\widehat{G_N}}^2\Brackets{\vecxi \otimesS\widebar{\widehat{\vecv}}[\vecxi]}:\Brackets{\widehat{G_N}}^{-1}\exp\Brackets{-\pi\iu \frac{\vecxi\cdot \vecOne}{N}}\widehat{\tenC_N^\star:\matU^\star}[\vecxi] \\
=&\sum_{\vecxi\in \mathcal{F}_N} \widebar{\widehat{\matV^\star}}[\vecxi]:\widehat{\tenC_N^\star:\matU^\star}[\vecxi] \\
=&N^{-d} \sum_{\vecI\in \I_N} \widebar{\matV^\star}[\vecI]:\tenC_N^\star[\vecI]:\matU^\star[\vecI],
\end{aligned}
\]
where $\matV^\star[\vecI]=\fint_{Y_\vecI} \nablaS \vecv \dx \vecx$ and $\matU^\star[\vecI]=\fint_{Y_\vecI} \nablaS \vecu_N \dx \vecx$. Similarly,
\[
-\AngleBrackets{\nablaS\Brackets{\mathcal{R}_N \widebar{\vecv}}:\Q_N\Brackets{\tenC_N:\matE}}=-N^{-d} \sum_{\vecI\in \I_N} \widebar{\matV^\star}[\vecI]:\tenC_N^\star[\vecI]:\matE.
\]
We hence arrive at ($2\Leftrightarrow 3$).
\end{proof}

The following theorem provides a priori estimate and a convergence proof of the basic scheme.
\begin{theorem}\label{thm:MS main}
Let $\tenC$ and $\tenC_N$ satisfy Assumptions A-C, $\vecu$ be the solution of \cref{eq:varia form}. Then there exists a unique solution $\vecu_N$ of the variation problem \cref{eq:MS form3} with an estimate
\[
\begin{aligned}
&\Lambda' \Norm{\nablaS \Brackets{\vecu_N-\vecu}}_{L^2(Y)}\\
\leq & C\Braces{\Lambda''\Norm{\matVarep-\Proj_N \matVarep}_{L^2(Y)}+\Norm{\matsigma-\Proj_N\matsigma}_{L^2(Y)}+ \Norm{\Brackets{\tenC-\tenC_N}:\matVarep}_{L^2(Y)}}
\end{aligned}
\]
where $\matVarep=\nablaS \vecu+\matE$, $\matsigma=\tenC:\matVarep$ and $C$ is a positive constant.
\end{theorem}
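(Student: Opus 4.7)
The plan is to establish both halves of the claim---well-posedness and the quantitative estimate---by working with the equivalent form \cref{eq:MS form2} of the scheme, whose bilinear form
\[
a_N(\vecu,\vecv) = N^{-d}\sum_{\vecI\in\I_N}\Brackets{\fint_{Y_\vecI}\nablaS\vecv}:\tenC_N^\star[\vecI]:\Brackets{\fint_{Y_\vecI}\nablaS\vecu}
\]
is defined on the zero-mean subspace of $S_N^d$. Plancherel's identity combined with the uniform lower bound $\widehat{G_N}[\vecxi]\geq (2/\pi)^d$ on $\mathcal{F}_N$ yields $N^{-d}\sum_\vecI \Seminorm{\fint_{Y_\vecI}\nablaS\vecv_N}^2\geq (2/\pi)^{2d}\Norm{\nablaS\vecv_N}^2_{L^2(Y)}$; together with Assumption~B and Korn's inequality this gives coercivity with respect to $\Norm{\nablaS\cdot}_{L^2}$, so Lax--Milgram furnishes existence and uniqueness of $\vecu_N$.

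For the quantitative estimate I set $\vecw := \vecu_N - \Proj_N\vecu\in S_N^d$. Since $\Proj_N$ commutes with $\nablaS$, one has $\Norm{\nablaS(\Proj_N\vecu - \vecu)}_{L^2} = \Norm{\matVarep - \Proj_N\matVarep}_{L^2}$, so by the triangle inequality it suffices to bound $\Norm{\nablaS\vecw}_{L^2}$. Coercivity gives $\Lambda'(2/\pi)^{2d}\Norm{\nablaS\vecw}^2\leq a_N(\vecw,\vecw)$; substituting the discrete equation and the identity $\nablaS\Proj_N\vecu + \matE = \Proj_N\matVarep$ rewrites the right-hand side as $-N^{-d}\sum_\vecI\Brackets{\fint_{Y_\vecI}\nablaS\vecw}:\tenC_N^\star[\vecI]:\fint_{Y_\vecI}\Proj_N\matVarep$. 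The successive splittings $\Proj_N\matVarep = \matVarep - (\matVarep - \Proj_N\matVarep)$ and $\tenC_N^\star[\vecI]:\fint_{Y_\vecI}\matVarep = \fint_{Y_\vecI}\matsigma + \fint_{Y_\vecI}(\tenC_N - \tenC):\matVarep$ decompose this into three contributions, two of which are absorbed immediately by Cauchy--Schwarz together with the Jensen bound $N^{-d}\sum_\vecI|\fint_{Y_\vecI}f|^2\leq \Norm{f}^2_{L^2}$, delivering the terms $\Lambda''\Norm{\matVarep - \Proj_N\matVarep}_{L^2}$ and $\Norm{(\tenC - \tenC_N):\matVarep}_{L^2}$.

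The remaining contribution, $-N^{-d}\sum_\vecI\Brackets{\fint_{Y_\vecI}\nablaS\vecw}:\fint_{Y_\vecI}\matsigma$, is what I expect to be the main obstacle, since a direct Cauchy--Schwarz here would yield $\Norm{\matsigma}_{L^2}$ rather than a projection error. Reading the key identity from the proof of \cref{lem:MS equi} in reverse rewrites it as $-\AngleBrackets{\mathcal{R}_N\nablaS\vecw:\Q_N\matsigma}$. The resolution is to notice that $\mathcal{R}_N\vecw\in S_N^d$ carries zero mean and therefore serves as an admissible test function in the continuous variational problem \cref{eq:varia form} (any complex part being handled by linearity over $\R$); since $\mathcal{R}_N$ commutes with $\nablaS$, this yields the orthogonality $\AngleBrackets{\mathcal{R}_N\nablaS\vecw:\matsigma}=0$. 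Subtracting this vanishing term leaves $\AngleBrackets{\mathcal{R}_N\nablaS\vecw:(\Q_N\matsigma - \matsigma)}$, which is controlled by $\Norm{\mathcal{R}_N}_{L^2\to L^2}\leq 1$ and the bound $\Norm{f - \Q_Nf}_{L^2}\leq(1+(\pi/2)^d)\Norm{f-\Proj_Nf}_{L^2}$ from \cref{prop:QN}, producing exactly the remaining $\Norm{\matsigma - \Proj_N\matsigma}_{L^2}$ contribution. Dividing out a factor of $\Norm{\nablaS\vecw}_{L^2}$ and combining the three bounds through the triangle inequality completes the proof.
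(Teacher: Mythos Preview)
Your proposal is correct and follows essentially the same route as the paper: both prove coercivity via the lower bound $\widehat{G_N}\geq(2/\pi)^d$, take the error $e_N=\vecu_N-\Proj_N\vecu$, insert the continuous orthogonality $\AngleBrackets{\mathcal{R}_N\nablaS e_N:\matsigma}=0$ coming from \cref{eq:varia form} tested with $\mathcal{R}_N e_N$, and arrive at the same three-term splitting bounded by \cref{prop:QN}. The only cosmetic difference is that you carry out the first two bounds in the voxel-sum formulation \cref{eq:MS form2} via Jensen and switch to \cref{eq:MS form3} only for the $\matsigma$ term, whereas the paper stays in \cref{eq:MS form3} throughout and applies the $\Q_N$ bound from \cref{prop:QN} to all three pieces.
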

\begin{proof}
We need first prove a coercivity estimate. Taking any $\vecv_N(\vecx) \in S_N^d$ with $\widehat{\vecv_N}[\vecxi]$ as its Fourier coefficients, recalling the proof of \cref{lem:MS equi}, we have
\[
\begin{aligned}
&\AngleBrackets{\nablaS\Brackets{\mathcal{R}_N \widebar{\vecv_N}}:\Q_N\Brackets{\tenC_N:\nablaS \vecv_N}} \\
=&N^{-d}\sum_{\vecI\in \I_N}\fint_{Y_\vecI} \nablaS \widebar{\vecv_N} \dx \vecx :\tenC_N^\star[\vecI]: \fint_{Y_\vecI} \nablaS \vecv_N \dx \vecx \\
\geq& N^{-d}\Lambda'\sum_{\vecI\in \I_N}\fint_{Y_\vecI} \nablaS \widebar{\vecv_N} \dx \vecx : \fint_{Y_\vecI} \nablaS \vecv_N \dx \vecx \\
=& \Lambda' \sum_{\vecxi\in \mathcal{F}_N} \Seminorm{\widehat{\matV^\star}}^2[\vecxi],
\end{aligned}
\]
where $\matV^\star[\vecI]=\fint_{Y_\vecI} \nablaS \vecv_N\dx \vecx$. Note that for any $\vecxi\in \mathcal{F}_N$,
\[
\Seminorm{\widehat{\matV^\star}}[\vecxi]=\widehat{G_N}\Seminorm{2\pi\iu\vecxi\otimesS \widehat{\vecv_N}}\geq \Brackets{\frac{2}{\pi}}^d \Seminorm{\widehat{\nablaS \vecv_N}}[\vecxi],
\]
which leads
\[
\AngleBrackets{\nablaS\Brackets{\mathcal{R}_N \widebar{\vecv_N}}:\Q_N\Brackets{\tenC_N:\nablaS \vecv_N}} \geq \Lambda' \Brackets{\frac{2}{\pi}}^{2d} \Norm{\nablaS \vecv_N}_{L^2(Y)}^2.
\]

Replace $\vecv_N$ with $e_N=\vecu_N-\Proj_N \vecu$ in the above inequality and combine the variational equalities \cref{eq:varia form,eq:MS form3},
\[
\begin{aligned}
&\Lambda' \Brackets{\frac{2}{\pi}}^{2d} \Norm{\nablaS e_N}_{L^2(Y)}^2 \\
\leq& -\AngleBrackets{\nablaS\Brackets{\mathcal{R}_N \widebar{e_N}}:\Q_N\Brackets{\tenC_N:\matE}}-\AngleBrackets{\nablaS\Brackets{\mathcal{R}_N \widebar{e_N}}:\Q_N\Brackets{\tenC_N:\nablaS \Proj_N \vecu}} \\
=& \AngleBrackets{\nablaS\Brackets{\mathcal{R}_N \widebar{e_N}}:\Brackets{\tenC:\nablaS \vecu + \tenC:\matE -\Q_N\Brackets{\tenC_N:\matE+ \tenC_N:\nablaS \Proj_N \vecu}}} \\
=& \AngleBrackets{\nablaS\Brackets{\mathcal{R}_N \widebar{e_N}}:\Brackets{\matsigma-\Q_N\matsigma -\Q_N\Brackets{\tenC_N:\Brackets{\Proj_N\matVarep-\matVarep}+\Brackets{\tenC_N-\tenC}:\matVarep}}}.
\end{aligned}
\]
It is easy to show that $\Norm{\nablaS\Brackets{\mathcal{R}_N \widebar{e_N}}}_{L^2(Y)}=\Norm{\mathcal{R}_N\Brackets{\nablaS \widebar{e_N}}}_{L^2(Y)}\leq \Norm{\nablaS e_N}_{L^2(Y)}$. Finally, the priori estimate follows from \cref{prop:QN} and H\"{o}lder's inequality.
\end{proof}

Denote by $\tenC_N^{\textnormal{eff},\textnormal{B}}$ the effective coefficients obtained by Moulinec-Suquent's scheme, based on the equivalent results in \cref{lem:MS equi}, we have
\[
\begin{aligned}
\tenC_N^{\textnormal{eff},\textnormal{B}}:\matE=&N^{-d}\sum_{\vecI\in \I_N} \tenC_N^\star[\vecI]:\matVarep_N^\star[\vecI]\\
=& N^{-d}\sum_{\vecI\in \I_N}\tenC_N^\star[\vecI]:\Brackets{\fint_{Y_\vecI} \nablaS\vecu_N\dx \vecx+\matE}\\
=&\AngleBrackets{\tenC_N:\Brackets{\nablaS \vecu_N+\matE}}.\\
\end{aligned}
\]
The following theorem shows the convergence of the effective coefficients.
\begin{theorem}
Under Assumptions A-C, $\tenC_N^{\textnormal{eff},\textnormal{B}}$ converges to $\tenC^\textnormal{eff}$ as $N\rightarrow \infty$, where $\tenC^\textnormal{eff}$ is defined by \cref{eq:eff}.
\end{theorem}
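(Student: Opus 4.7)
The plan is to deduce convergence of the effective tensor from strong $L^2$ convergence of the discretized strain field, which itself comes from the a priori estimate already established in \cref{thm:MS main}. First, I would write the identity
\[
\Brackets{\tenC_N^{\textnormal{eff},\textnormal{B}}-\tenC^{\textnormal{eff}}}:\matE=\AngleBrackets{\tenC_N:\nablaS\Brackets{\vecu_N-\vecu}}+\AngleBrackets{\Brackets{\tenC_N-\tenC}:\matVarep},
\]
obtained by adding and subtracting $\tenC_N:\nablaS \vecu$ and regrouping. By Assumption B, $|\tenC_N|\leq \Lambda''$ pointwise, so Cauchy--Schwarz bounds the first summand by $\Lambda''\Norm{\nablaS(\vecu_N-\vecu)}_{L^2(Y)}$ and the second by $\Norm{(\tenC_N-\tenC):\matVarep}_{L^2(Y)}$. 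The task therefore reduces to proving that both quantities vanish as $N\to\infty$.

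The second, $\Norm{(\tenC_N-\tenC):\matVarep}_{L^2(Y)}\to 0$, is the geometric heart of the argument. Under Assumption C, $\tenC_N(\vecx)$ differs from $\tenC(\vecx)$ only on the union $B_N$ of voxels $Y_\vecI$ that meet more than one subdomain, i.e., voxels intersecting $\bigcup_{l}\partial D_l$. Since every $\partial D_l$ is Lipschitz and each offending voxel lies within a $\sqrt{d}/N$-tubular neighborhood of some $\partial D_l$, a standard covering estimate yields $\Meas(B_N)\leq C/N$. Combined with the uniform bound $|\tenC-\tenC_N|\leq 2\Lambda''$ and $|\matVarep|^{2}\in L^{1}(Y)$, absolute continuity of the Lebesgue integral then forces $\int_{B_N}|\matVarep|^{2}\dx\vecx\to 0$, which is the desired decay. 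Once this is in hand, \cref{thm:MS main} applies: the first two terms on its right-hand side tend to zero because trigonometric polynomials are dense in $L^{2}(Y;\Cfield)$ and therefore $\Norm{f-\Proj_N f}_{L^{2}(Y)}\to 0$ for every $f\in L^{2}(Y)$, while the third term is exactly what Step 2 just controlled. We conclude $\Norm{\nablaS(\vecu_N-\vecu)}_{L^2(Y)}\to 0$, and feeding this back into the identity of the first paragraph finishes the proof.

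The main obstacle is the control of $\Meas(B_N)$: everything else is either an immediate consequence of \cref{thm:MS main} or standard Fourier density. This obstacle is mild because Assumption~A guarantees Lipschitz interfaces, but it is the only place where the qualitative hypotheses on the microstructure are genuinely used, and it also explains why no quantitative rate is obtained here (absolute continuity provides convergence without an explicit speed, reflecting that $\matVarep$ is merely $L^{2}$ without additional regularity).
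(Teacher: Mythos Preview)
The paper does not give an explicit proof of this theorem; it is stated immediately after \cref{thm:MS main} and the formula $\tenC_N^{\textnormal{eff},\textnormal{B}}:\matE=\AngleBrackets{\tenC_N:(\nablaS\vecu_N+\matE)}$ as a direct corollary. Your argument supplies precisely the natural details that this implicit reasoning requires---the two-term splitting of $(\tenC_N^{\textnormal{eff},\textnormal{B}}-\tenC^{\textnormal{eff}}):\matE$, the measure control of the interface voxels via the Lipschitz hypothesis of Assumption~A combined with absolute continuity of the integral of $|\matVarep|^{2}$, and the $L^{2}$ density of trigonometric polynomials to kill the projection terms---and is correct.
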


\section{Convergence of Willot's scheme} \label{sec:WL}

Let $S_{N\shortminus} \subset S_N$ be defined as
\[
S_{N\shortminus}\coloneqq \Braces{f(\vecx)=\sum_{\vecxi\in \mathcal{F}_{N\shortminus}}c[\vecxi]\exp(2\pi \iu \vecxi\cdot \vecx): c[\vecxi]\in \mathbb{C}},
\]
denote by $V_{N\shortminus}$ a subset of $V_N$ with
\[
V_{N\shortminus}\coloneqq\Braces{v_N\in V_N: v_N\Brackets{\frac{\vecI}{N}}=\sum_{\vecxi\in \mathcal{F}_{N\shortminus}}c[\vecxi]\exp\Brackets{2\pi\iu\frac{\vecxi\cdot \vecI}{N}},\forall \vecI\in \I_N },
\]
we first present a similar lemma of \cref{lem:MS equi} for Willot's scheme.
\begin{lemma}\label{lem:WL equi}
The following statements are equivalent:
\begin{enumerate}
\item There exists $\matVarep_N^\star$ such that
\begin{equation}\label{eq:WL form1}
\widehat{\matVarep_N^\star}[\vecxi]=\left\{
\begin{aligned}
&-\widehat{\tensorsym{\Gamma}^0_\textnormal{W}}[\vecxi]:\widehat{\matrixsym{\tau}_N^\star}[\vecxi], & \forall \vecxi\in \mathcal{F}_{N\shortminus}\setminus\Braces{\vecZero}, \\
& \matE, & \vecxi=\vecZero, \\
& \vecZero, &\forall \vecxi\in \mathcal{F}_N\setminus \mathcal{F}_{N\shortminus}
\end{aligned}
\right.
\end{equation}
where $\matrixsym{\tau}_N^\star[\vecI]=\Brackets{\tenC_N^\star[\vecI]-\tenC^\textnormal{ref}}:\matVarep_N^\star[\vecI]$ for all $\vecI \in \I_N$.

\item There exists $\vecu_N \in V_{N\shortminus}^d$ with $\AngleBrackets{\vecu_N}=\vecZero$, such that for any $\vecv_N \in V_{N\shortminus}^d$
\begin{equation}\label{eq:WL form2}
\begin{aligned}
&N^{-d}\sum_{\vecI\in \I_N}  \nabla^s\vecv_N(\vecx_\vecI) : \tenC_N^\star[\vecI] : \nabla^s \vecu_N(\vecx_\vecI)\\
=&-N^{-d}\sum_{\vecI\in \I_N} \nabla^s\vecv_N(\vecx_\vecI) : \tenC_N^\star[\vecI] : \matE.
\end{aligned}
\end{equation}

\item There exists $\vecu_N \in S_{N\shortminus}^d$ with $\AngleBrackets{\vecu_N}=\vecZero$, such that for any $\vecv_N \in S_{N\shortminus}^d$
\begin{equation}\label{eq:WL form3}
\begin{aligned}
&N^{-d}\sum_{\vecI\in \I_N} \fint_{Y_\vecI}  \DopS_N \widebar{ \vecv_N} \dx \vecx : \tenC_N^\star[\vecI] : \fint_{Y_\vecI} \DopS_N \vecu_N \dx \vecx\\
=&-N^{-d}\sum_{\vecI\in \I_N} \fint_{Y_\vecI} \DopS_N \widebar{\vecv_N} \dx \vecx : \tenC_N^\star[\vecI] : \matE,
\end{aligned}
\end{equation}
where the operator $\DopS_N$ is defined as
\[
\DopS_N f=\sum_{\vecxi\in \mathcal{F}_{N\shortminus}} \veck_N[\vecxi] \otimesS \widehat{f}[\vecxi] \exp(2\pi\iu \vecxi\cdot\vecx)
\]
for $f\in L^2(Y;\Cfield)$.

\item There exists $\vecu_N \in S_{N\shortminus}^d$ with $\AngleBrackets{\vecu_N}=\vecZero$, such that for any $\vecv \in H^1_\#(Y;\Cfield^d)$
\begin{equation}\label{eq:WL form4}
\AngleBrackets{\mathcal{R}_N\Brackets{\DopS_N \widebar{ \vecv}}:\Q_N\Brackets{\tenC_N:\DopS_N \vecu_N}}=-\AngleBrackets{\mathcal{R}_N\Brackets{\DopS_N \widebar{ \vecv}}:\Q_N\Brackets{\tenC_N:\matE}}.
\end{equation}
\end{enumerate}
\end{lemma}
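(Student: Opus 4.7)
The plan is to mirror the three-step structure of Lemma \ref{lem:MS equi}, proving $(1)\Leftrightarrow(3)$, $(3)\Leftrightarrow(4)$, and $(2)\Leftrightarrow(3)$, under the substitutions $2\pi\iu\vecxi\rightsquigarrow\veck_N[\vecxi]$, $\nablaS\rightsquigarrow\DopS_N$, $S_N\rightsquigarrow S_{N\shortminus}$, and $\widehat{\tensorsym{\Gamma}^0}\rightsquigarrow\widehat{\tensorsym{\Gamma}^0_\textnormal{W}}$. The new ingredient, absent from the Moulinec--Suquent case, is the link between the pointwise reduced-integration FEM formulation on $V_{N\shortminus}^d$ and the cell-averaged spectral formulation on $S_{N\shortminus}^d$.

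For $(1)\Leftrightarrow(3)$ I would expand $\vecu_N,\vecv_N\in S_{N\shortminus}^d$ in their Fourier series, apply Parseval to the DFT sum in \cref{eq:WL form3}, and split $\tenC_N^\star=\tenC^\textnormal{ref}+\delta\tenC_N^\star$. The reference contribution reduces to a bilinear form in $\veck_N[\vecxi]\otimesS\widehat{\vecu_N}[\vecxi]$ (decorated by $|\widehat{G_N}[\vecxi]|^2$ on both sides, which cancels), and the explicit form of $\widehat{\tensorsym{\Gamma}^0_\textnormal{W}}$ inverts it to recover \cref{eq:WL form1} on $\mathcal{F}_{N\shortminus}\setminus\Braces{\vecZero}$; the trivial extension by zero outside $\mathcal{F}_{N\shortminus}$ is automatic because the test space is blind to those frequencies. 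For $(3)\Leftrightarrow(4)$ I would reduce the test function in \cref{eq:WL form4} to $S_{N\shortminus}^d$ (higher modes contribute nothing since $\DopS_N$ annihilates them) and expand $\mathcal{R}_N$ and $\Q_N$ in Fourier: the factor $\widehat{G_N}^2[\vecxi]$ from $\mathcal{R}_N$ cancels against the $\widehat{G_N}^{-1}[\vecxi]$ from $\Q_N$ together with the averaging factor $\widehat{G_N}[\vecxi]$ produced by $\fint_{Y_\vecI}$, while the phase $\exp(-\iu\pi\vecxi\cdot\vecOne/N)$ of $\Q_N$ cancels the cell-centre phase $\exp(\iu\pi\vecxi\cdot\vecOne/N)$, reproducing the DFT sum in \cref{eq:WL form3}.

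The main obstacle is $(2)\Leftrightarrow(3)$. I would introduce the bijection $V_{N\shortminus}^d\leftrightarrow S_{N\shortminus}^d$ sending $\vecu_N$ to the unique $\tilde\vecu_N\in S_{N\shortminus}^d$ determined in the frequency domain by
\[
\widehat{\tilde\vecu_N}[\vecxi]=\widehat{G_N}^{-1}[\vecxi]\exp\Brackets{-\iu\pi\frac{\vecxi\cdot\vecOne}{N}}\widehat{\vecu_N^\star}[\vecxi],\quad \vecxi\in\mathcal{F}_{N\shortminus},
\]
which is well-defined thanks to $\widehat{G_N}[\vecxi]\in[(2/\pi)^d,1]$ and preserves the zero-mean constraint since $\widehat{G_N}[\vecZero]=1$. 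The decisive identity to verify is
\[
\nablaS\vecu_N(\vecx_\vecI)=\fint_{Y_\vecI}\DopS_N\tilde\vecu_N\dx\vecx,\qquad \forall \vecI\in\I_N,
\]
so that the quadrature sum in \cref{eq:WL form2} matches the one in \cref{eq:WL form3} term by term. Computing the left-hand side from the 8-corner trilinear stencil at the centre $\vecx_\vecI$ and using the factorizations $\exp(2\pi\iu\vecxi_m/N)-1=2\iu\exp(\iu\pi\vecxi_m/N)\sin(\pi\vecxi_m/N)$ together with $\exp(2\pi\iu\vecxi_m/N)+1=2\exp(\iu\pi\vecxi_m/N)\cos(\pi\vecxi_m/N)$, one recovers exactly $\veck_N[\vecxi]$ as given in \cref{eq:WL k_N}, including the built-in phase $\exp(\iu\pi\vecxi\cdot\vecOne/N)$ that absorbs the half-cell offset between grid nodes $\vecI/N$ and cell centres $\vecx_\vecI$. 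The delicate part is precisely this bookkeeping of the amplitude factors $\widehat{G_N}[\vecxi]$ and the half-integer phases, which encode the structural difference between a trigonometric polynomial in $S_{N\shortminus}$ and the piecewise-trilinear function in $V_{N\shortminus}$ that it is paired with.
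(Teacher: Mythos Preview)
Your proposal is correct and follows exactly the route the paper intends: the paper's entire proof is the single sentence ``The proof of this lemma is exactly same as \cref{lem:MS equi},'' so your plan to rerun that argument under the substitutions $2\pi\iu\vecxi\rightsquigarrow\veck_N[\vecxi]$, $\nablaS\rightsquigarrow\DopS_N$, $S_N\rightsquigarrow S_{N\shortminus}$, $\widehat{\tensorsym{\Gamma}^0}\rightsquigarrow\widehat{\tensorsym{\Gamma}^0_\textnormal{W}}$ is precisely what is being asked. Your treatment of the extra link $(2)\Leftrightarrow(3)$ --- the bijection $V_{N\shortminus}^d\leftrightarrow S_{N\shortminus}^d$ together with the identity $\nablaS\vecu_N(\vecx_\vecI)=\fint_{Y_\vecI}\DopS_N\tilde\vecu_N\,\mathrm{d}\vecx$ obtained from the trilinear centre stencil and the factorizations of $\exp(2\pi\iu\vecxi_m/N)\pm 1$ --- is the one genuinely new computation beyond \cref{lem:MS equi}, and you have identified it correctly, including the role of the $\widehat{G_N}^{-1}$ amplitude and the half-cell phase.
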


The proof of this lemma is exactly same as \cref{lem:MS equi}. The variational form \cref{eq:WL form2} could be treated as a reduced integration FEM. Note that \cref{eq:WL form3,eq:WL form4} are parallel versions to \cref{eq:MS form2,eq:MS form3}, while original $\nablaS$ is replaced with $\DopS_N$. The difficulty in analyzing Willot's scheme is losing coercivity, and we will explain it as follows. It is easy to see that $\DopS_N \widebar{f}=\widebar{\DopS_N f}$, and the left-hand side of \cref{eq:WL form3} gives
\[
\begin{aligned}
&N^{-d}\sum_{\vecI\in \I_N} \fint_{Y_\vecI}  \DopS_N \widebar{ \vecv_N} \dx \vecx : \tenC_N^\star[\vecI] : \fint_{Y_\vecI} \DopS_N \vecv_N \dx \vecx \\
\geq& \Lambda'N^{-d} \sum_{\vecI\in \I_N} \Seminorm{\fint_{Y_\vecI} \DopS_N\vecv_N\dx\vecx}^2 \\
=&\Lambda' \sum_{\vecxi\in \mathcal{F}_{N\shortminus}} \Seminorm{\veck_N\otimesS \widehat{\vecv^\star_N}}^2[\vecxi] \\
\geq&\frac{\Lambda'}{2} \sum_{\vecxi\in \mathcal{F}_{N\shortminus}} \Seminorm{\veck_N}^2 \Seminorm{\widehat{\vecv_N^\star}}^2.
\end{aligned}
\]
where $\vecv_N^\star[\vecI]=\fint_{Y_\vecI} \vecv_N\dx \vecx$. Then recall the definition of $\veck_N$ \cref{eq:WL k_N},
\[
\begin{aligned}
\Seminorm{\veck_N}^2&=\frac{1}{64}\Braces{\prod_{m=1}^d\Seminorm{\exp\Brackets{2\pi\iu\frac{\vecxi_m}{N}}+1}^2}\Braces{\sum_{n=1}^d \Seminorm{2N\tan\Brackets{\pi\frac{\vecxi_n}{N}}}^2}\\
& =\Braces{\prod_{m=1}^d \cos^2\Brackets{\pi\frac{\vecxi_m}{N}}}\Braces{\sum_{n=1}^d \Seminorm{2N\tan\Brackets{\pi\frac{\vecxi_n}{N}}}^2}.
\end{aligned}
\]
According to the basic inequality $\Seminorm{\tan(x)}\geq \Seminorm{x}$ for $x\in (-\pi/2,\pi/2)$, we obtain $\Braces{\sum_{n=1}^d \Seminorm{2N\tan\Brackets{\pi\frac{\vecxi_n}{N}}}^2}\geq 4\pi^2 \Seminorm{\vecxi}^2$, and the coercivity
\[
c\Norm{\nabla \vecv_N}_{L^2(Y)}^2\leq N^{-d}\sum_{\vecI\in \I_N} \fint_{Y_\vecI}  \DopS_N \widebar{ \vecv_N} \dx \vecx : \tenC_N^\star[\vecI] : \fint_{Y_\vecI} \DopS_N \vecv_N \dx \vecx
\]
would emerge if $\prod_{m=1}^d \cos^2\Brackets{\pi\frac{\vecxi_m}{N}}$ can be uniformly bounded below. However, this is not true, because $\min_{\vecxi\in\mathcal{F}_{N\shortminus}}\prod_{m=1}^d \cos^2\Brackets{\pi\frac{\vecxi_m}{N}}=O(N^{-2d})$.

A remedy is considering a modified gradient operator $\TDop_N$ as
\begin{equation}
\TDop_N f(\vecx)\coloneqq \sum_{\vecxi\in \mathcal{F}_{N\shortminus}} \tilde{\veck}_N[\vecxi] \widehat{f}[\vecxi]\exp(2\pi\iu\vecxi\cdot \vecx)
\end{equation}
for $f\in L^2(Y;\Cfield)$, where
\[
\tilde{\veck}_N=2\iu N\SquareBrackets{\tan\Brackets{\pi\frac{\vecxi_1}{N}}, \tan\Brackets{\pi\frac{\vecxi_1}{N}}, \tan\Brackets{\pi\frac{\vecxi_1}{N}}},
\]
and $\TDopS_N$ is the symmetrization of $\TDop_N$ as $\nablaS$ of $\nabla$ for vector-valued functions. We also rewrite \cref{eq:WL form4} as follows: find $\vecw_N\in S_{N\shortminus}^d$ such that for any $\vecv\in H^1_\#(Y;\Cfield^d)$,
\begin{equation}\label{eq:WL form5}
\AngleBrackets{\mathcal{R}_N\Brackets{\TDopS_N \widebar{ \vecv}}:\Q_N\Brackets{\tenC_N:\TDopS_N \vecw_N}}=-\AngleBrackets{\mathcal{R}_N\Brackets{\TDopS_N \widebar{ \vecv}}:\Q_N\Brackets{\tenC_N:\matE}}.
\end{equation}
The motivation behind is an observation that $\vecw_N$ and $\vecu_N$ of \cref{eq:WL form4} are connected with
\[
\frac{1}{8}\prod_{m=1}^d\Brackets{\exp\Brackets{\pi\frac{\vecxi_m}{N}}+1}\widehat{\vecu_N}[\vecxi]= \widehat{\vecw_N}[\vecxi]
\]
for all $\vecxi \in \mathcal{F}_{N\shortminus}$.

To unveil the limiting behavior of $\vecw_N$, we first prove several properties corresponding to $\TDop_N$:
\begin{proposition} \label{prop:TDN}
For any $f\in C^\infty_\#(Y;\Cfield)$, it holds that $\TDop_N f\rightarrow \nabla f$ in $L^2(Y;\Cfield^d)$. If a series $\Braces{f_N} \subset H^1_\#(Y;\Cfield)$ satisfies $f_N \rightharpoonup f_\infty$ in $H^1_\#(Y;\Cfield)$ and $\Braces{\TDop_N f_N} \subset L^2(Y;\Cfield^d)$, then $\TDop_N f_N \rightharpoonup \nabla f_\infty$ in $L^2(Y;\Cfield^d)$.
\end{proposition}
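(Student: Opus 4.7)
The plan is to work in Fourier coordinates and split the frequency range into a well-resolved low zone (where $\tilde{\veck}_N$ is close to $2\pi \iu \vecxi$) and a high-frequency tail (where either smoothness or the uniform $L^2$ bound absorbs the bad behavior).

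For the first claim I would apply Parseval to write
\begin{equation*}
\Norm{\TDop_N f - \nabla f}_{L^2(Y)}^2 = \sum_{\vecxi \in \mathcal{F}_{N\shortminus}} \Seminorm{ \tilde{\veck}_N[\vecxi] - 2\pi\iu\vecxi }^2 \Seminorm{\widehat{f}[\vecxi]}^2 + \sum_{\vecxi \in \Z^d \setminus \mathcal{F}_{N\shortminus}} 4\pi^2 \Seminorm{\vecxi}^2 \Seminorm{\widehat{f}[\vecxi]}^2.
\end{equation*}
The tail sum is a Parseval remainder of $\nabla f \in L^2$ and so vanishes as $N \to \infty$. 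For the first sum I would fix $\alpha \in (0,1/2)$ and split $\mathcal{F}_{N\shortminus}$ into $A_N = \{\vecxi : |\vecxi_m| \leq \alpha N \text{ for all } m\}$ and its complement $B_N$. On $A_N$ a Taylor expansion of $\tan$ near $0$ gives $|2N\tan(\pi\vecxi_m/N) - 2\pi\vecxi_m| \leq C_\alpha |\vecxi_m|^3/N^2$, so the $A_N$-contribution is at most $C_\alpha N^{-4} \sum_{\vecxi} |\vecxi|^6 |\widehat{f}[\vecxi]|^2$, finite by smoothness and hence $O(N^{-4})$. On $B_N$ every frequency satisfies $|\vecxi| \geq \alpha N$, and the crude bound $|\tilde{\veck}_N[\vecxi]| \leq C N^2$ is absorbed using super-polynomial decay: for any $k$, $\sum_{B_N} |\widehat{f}[\vecxi]|^2 \leq (\alpha N)^{-2k} \Norm{f}_{H^k(Y)}^2$, giving an $O(N^{4-2k})$ bound that vanishes for $k > 2$.

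For the second claim I would interpret ``$\{\TDop_N f_N\} \subset L^2$'' as uniform boundedness in $L^2$ (without which weak convergence is unreachable). Because trigonometric polynomials are total in $L^2(Y;\Cfield^d)$, it suffices to check
\begin{equation*}
\AngleBrackets{(\TDop_N f_N) \cdot \exp(-2\pi\iu\vecxi'\cdot\vecx)\, \vectorsym{e}_j} \longrightarrow \AngleBrackets{(\nabla f_\infty) \cdot \exp(-2\pi\iu\vecxi'\cdot\vecx)\, \vectorsym{e}_j}
\end{equation*}
for every fixed $\vecxi' \in \Z^d$ and $1 \leq j \leq d$, where $\vectorsym{e}_j$ denotes the $j$-th standard basis vector. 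For $N$ large enough that $\vecxi' \in \mathcal{F}_{N\shortminus}$, the left-hand side equals $[\tilde{\veck}_N[\vecxi']]_j \,\widehat{f_N}[\vecxi']$. Weak $H^1$-convergence $f_N \rightharpoonup f_\infty$ yields pointwise Fourier-coefficient convergence $\widehat{f_N}[\vecxi'] \to \widehat{f_\infty}[\vecxi']$, while continuity of $\tan$ at a fixed argument gives $[\tilde{\veck}_N[\vecxi']]_j \to 2\pi\iu \vecxi'_j$. The product therefore converges to $2\pi\iu \vecxi'_j \widehat{f_\infty}[\vecxi']$, matching the right-hand side.

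The main obstacle is the high-frequency zone $B_N$ in part one: near $\vecxi_m = \pm N/2$ one has $\tan(\pi\vecxi_m/N) \sim N/\pi$, inflating $|\tilde{\veck}_N|$ to $O(N^2)$ instead of the $O(|\vecxi|)$ one would naively expect. Controlling this forces the use of super-polynomial decay of $\widehat{f}$, which is precisely why $C^\infty$-smoothness (rather than mere $H^1$-regularity) is assumed. A secondary bookkeeping point is that the statement ``$\{\TDop_N f_N\} \subset L^2$'' must be read as a uniform $L^2$ bound for the density/testing argument to close, since pointwise membership is automatic for finite trigonometric polynomials and insufficient by itself.
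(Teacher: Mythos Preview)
Your argument is correct and follows the same overall architecture as the paper: Parseval, split into $\mathcal{F}_{N\shortminus}$ and its complement, then use smoothness to kill both pieces; for the second claim, test against a fixed trigonometric monomial and use $\tilde{\veck}_N[\vecxi']\to 2\pi\iu\vecxi'$ together with $\widehat{f_N}[\vecxi']\to\widehat{f_\infty}[\vecxi']$.

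The one genuine difference is how the $\mathcal{F}_{N\shortminus}$-sum is handled. You split into a low zone $A_N$ (Taylor expansion, $O(N^{-4})$) and a high zone $B_N$ (crude $|\tilde{\veck}_N|\leq CN^2$ bound absorbed by rapid decay). The paper instead proves a single uniform bound
\[
\Seminorm{\tilde{\veck}_N[\vecxi]-2\pi\iu\vecxi}^2 \leq C\bigl(1+\Seminorm{\vecxi}^4\bigr)\qquad\text{for all }N\text{ with }\vecxi\in\mathcal{F}_{N\shortminus},
\]
obtained from the monotonicity of $\tan(x)/x$ on $(0,\pi/2)$, and then invokes dominated convergence with majorant $(1+\Seminorm{\vecxi}^4)\Seminorm{\widehat{f}[\vecxi]}^2$. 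Their route is shorter and only consumes $H^2$-regularity, whereas your $B_N$ estimate spends $H^k$ with $k>2$; since $f\in C^\infty_\#$ this costs nothing, and your version has the side benefit of giving an explicit rate. Your remark that the hypothesis on $\Braces{\TDop_N f_N}$ must be read as a uniform $L^2$ bound is well taken; the paper's proof tacitly relies on this (it is exactly what is verified in Step~1 of the lemma where the proposition is applied), but does not spell it out.
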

\begin{proof}
For the first argument, after taking the Fourier expansion of $f$, we have
\[
\begin{aligned}
\Norm{\TDop_N f-\nabla f}_{L^2(Y)}^2 &= \sum_{\vecxi\in \mathcal{F}_{N\shortminus}} \Seminorm{\tilde{\veck}_N[\vecxi]-2\pi\iu\vecxi}^2 \Seminorm{\widehat{f}}^2[\vecxi]+ \sum_{\vecxi\in \Z^d\setminus \mathcal{F}_{N\shortminus}} 4\pi \Seminorm{\vecxi}^2 \Seminorm{\widehat{f}}^2[\vecxi] \\
&\coloneqq J_N'+ J_N''.
\end{aligned}
\]
According to smoothness of $f\in C^\infty_\#(Y;\Cfield)$, we have $J_N''\rightarrow 0$ as $N\rightarrow \infty$. We rewrite $J_N'$ as $\sum_{\vecxi \in \Z^d} a_N[\vecxi] \Seminorm{\widehat{f}}^2[\vecxi]$, where
\[
a_N[\vecxi]\coloneqq\left\{
\begin{aligned}
&4\pi^2\sum_{d=1}^m \Seminorm{\frac{N}{\pi}\tan\Brackets{\vecxi_m\frac{\pi}{N}}-\vecxi_m}^2, & N\geq 2\max_m\Seminorm{\vecxi_m}+1 \\
&0, & \text{ else},
\end{aligned}
\right.
\]
By the fact that $\tan(x)/x$ is monotone in $x\in (0,\pi/2)$, we have
\[
\Seminorm{\frac{N}{\pi}\tan\Brackets{\vecxi_m\frac{\pi}{N}}} \leq \frac{2\Seminorm{\vecxi_m}+1}{\pi}\tan\Brackets{\frac{\pi\Seminorm{\vecxi_m}}{2\Seminorm{\vecxi_m}+1}} \leq C\Brackets{2\Seminorm{\vecxi_m}+1}^2
\]
for $N\geq 2\max_m\Seminorm{\vecxi_m}+1$, which implies that there exists a positive constant $C$ such that $\Seminorm{a_N[\vecxi]}\leq C(\Seminorm{\vecxi}^4+1)$. Recalling smoothness of $f$, we arrive at $\sum_{\vecxi\in \Z^d} (1+\Seminorm{\vecxi}^4) \Seminorm{\widehat{f}}^2[\vecxi] < \infty$. It is easy to see that $a_N[\vecxi]\rightarrow 0$ for any $\vecxi$, then $J_N'\rightarrow 0$ follows from the dominated convergence theorem.

For the second argument, it suffices to prove that
\[
\AngleBrackets{\exp(-2\pi\iu\vecxi'\cdot\vecx) \widebar{\vecups} \cdot \TDop_Nf_N} \rightarrow \AngleBrackets{\exp(-2\pi\iu\vecxi'\cdot\vecx) \widebar{\vecups} \cdot \nabla f_\infty}=2\pi\iu \widebar{\vecups}\cdot \vecxi'\widehat{f_\infty}[\vecxi']
\]
for any $\vecxi'\in \Z^d$ and $\vecups \in \Cfield^d$. According to the definition of $\TDop_N$, we get
\[
\AngleBrackets{\exp(-2\pi\iu\vecxi'\cdot\vecx) \widebar{\vecups} \cdot \TDop_Nf_N}=\widebar{\vecups}\cdot\tilde{\veck}_N[\vecxi']\widehat{f}_N[\vecxi']
\]
for large enough $N$. Combining $\tilde{\veck}_N[\vecxi']\rightarrow 2\pi\iu \vecxi$ and $f_N\rightharpoonup f_\infty$ which leads $\widehat{f}_N[\vecxi']\rightarrow \widehat{f_\infty}[\vecxi']$, we hence finish the proof.
\end{proof}

The following lemma says that $\vecw_N$ \emph{weakly} converges to $\vecu$ as $N\rightarrow \infty$.
\begin{lemma} \label{lem:WL key}
Under Assumptions A-C, let $\vecu$ be the solution of \cref{eq:varia form}. Then there exists a unique $\vecw_N$ satisfying \cref{eq:WL form5} such that $\vecw_N\rightharpoonup \vecu$ in $H^1_\#(Y;\Cfield)$ as $N\rightarrow \infty$.
\end{lemma}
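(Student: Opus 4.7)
The strategy is a Lax--Milgram plus compactness argument, with all the subtlety absorbed into the modified operator $\TDopS_N$. First I will show that \cref{eq:WL form5} admits a unique solution $\vecw_N$ on $S_{N\shortminus}^d\cap \Braces{\AngleBrackets{\cdot}=\vecZero}$ satisfying a uniform $H^1$ bound; then I extract a weakly convergent subsequence and pass to the limit, identifying the limit with $\vecu$ via uniqueness for \cref{eq:varia form}.

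For the existence and the a priori estimate, repeating on the Fourier side the same Parseval manipulation that produced the pre-lemma lower bound, but with $\veck_N$ replaced throughout by $\tilde{\veck}_N$, gives
\[
\AngleBrackets{\mathcal{R}_N\Brackets{\TDopS_N \widebar{\vecv_N}}:\Q_N\Brackets{\tenC_N:\TDopS_N \vecv_N}} \geq \Lambda'\sum_{\vecxi\in \mathcal{F}_{N\shortminus}} \widehat{G_N}^2[\vecxi]\,\Seminorm{\tilde{\veck}_N[\vecxi]\otimesS \widehat{\vecv_N}[\vecxi]}^2
\]
for every $\vecv_N\in S_{N\shortminus}^d$. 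Combining $\widehat{G_N}\geq (2/\pi)^d$, the elementary inequality $\Seminorm{\vecv\otimesS \vecw}^2\geq \tfrac12\Seminorm{\vecv}^2\Seminorm{\vecw}^2$, and the decisive bound $\Seminorm{\tilde{\veck}_N[\vecxi]}^2\geq 4\pi^2\Seminorm{\vecxi}^2$ (from $\Seminorm{\tan x}\geq \Seminorm{x}$) yields a coercivity constant independent of $N$; continuity of the sesquilinear form follows from \cref{prop:QN}, from $\Norm{\mathcal{R}_N}\leq 1$, and from Assumption B. Lax--Milgram therefore produces a unique $\vecw_N$, and testing with $\vecv=\vecw_N$ in \cref{eq:WL form5} supplies the uniform bounds $\Norm{\nabla \vecw_N}_{L^2(Y)}+\Norm{\TDop_N \vecw_N}_{L^2(Y)}\leq C$.

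For the limit identification, these uniform bounds give a subsequence (not relabeled) with $\vecw_N\rightharpoonup \vecw^\star$ in $H^1_\#(Y;\Cfield^d)$, and the second part of \cref{prop:TDN} upgrades this to $\TDopS_N \vecw_N \rightharpoonup \nablaS \vecw^\star$ weakly in $L^2$. By Assumptions A and C, $\tenC_N$ is uniformly bounded and agrees with $\tenC$ outside the thin boundary layer of ``ambiguous'' pixels intersecting several $D_l$'s, a set whose measure is $O(N^{-1})$, so $\tenC_N\to \tenC$ strongly in $L^p(Y)$ for every finite $p$. A standard weak--strong pairing then yields $\tenC_N:\TDopS_N \vecw_N \rightharpoonup \tenC:\nablaS \vecw^\star$ in $L^2$, and the third bullet of \cref{prop:QN} propagates this weak convergence through $\Q_N$. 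On the test-function side, fixing a smooth $\vecv$, the first part of \cref{prop:TDN} combined with the fact that $\mathcal{R}_N f\to f$ in $L^2$ for every fixed $f$ gives $\mathcal{R}_N\Brackets{\TDopS_N \widebar{\vecv}}\to \nablaS\widebar{\vecv}$ strongly in $L^2$. Pairing strong against weak in both sides of \cref{eq:WL form5} yields in the limit
\[
\AngleBrackets{\nablaS\widebar{\vecv}:\tenC:\nablaS\vecw^\star}=-\AngleBrackets{\nablaS\widebar{\vecv}:\tenC:\matE},
\]
which is precisely \cref{eq:varia form}; uniqueness forces $\vecw^\star=\vecu$, and the standard subsequence principle lifts subsequential to full-sequence weak convergence.

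The main obstacle is the uniform coercivity in the first step: without the substitution $\veck_N\to\tilde{\veck}_N$, the parasitic factor $\prod_m\cos^2(\pi\vecxi_m/N)$ highlighted right before the lemma would force the coercivity constant to $0$ as $N\to\infty$, and no subsequent limit passage could survive. A secondary technicality is the strong $L^p$ convergence $\tenC_N\to\tenC$, where Assumption C combined with the Lipschitz regularity of the subdomain boundaries from Assumption A is essential to control the boundary-layer measure.
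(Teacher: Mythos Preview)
Your proposal is correct and follows essentially the same three-step route as the paper's proof: uniform coercivity of the $\TDopS_N$-form via $\Seminorm{\tilde{\veck}_N}\geq 2\pi\Seminorm{\vecxi}$ to get an $H^1$ bound, a weak--strong pairing to show $\Q_N\bigl(\tenC_N:\TDopS_N\vecw_N\bigr)\rightharpoonup \tenC:\nablaS\vecw^\star$, and strong convergence of $\mathcal{R}_N\bigl(\TDopS_N\widebar{\vecv}\bigr)$ on smooth test functions to identify the limit. The only cosmetic differences are that the paper phrases the weak--strong step as ``$\matV:\tenC_N\to\matV:\tenC$ strongly for each fixed $\matV\in L^2$'' rather than ``$\tenC_N\to\tenC$ in every $L^p$'', and it does not name Lax--Milgram explicitly (the problem being finite-dimensional on $S_{N\shortminus}^d$, coercivity already gives existence and uniqueness).
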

\begin{proof}
This proof consists of several steps.

\paragraph{Step1} As previously stated, we have
\[
\begin{aligned}
&\AngleBrackets{\mathcal{R}_N\Brackets{\TDopS_N \widebar{ \vecv_N}}:\Q_N\Brackets{\tenC_N:\TDopS_N \vecv_N}} \geq \frac{\Lambda'}{2} \sum_{\vecxi\in \mathcal{F}_{N\shortminus}} \Seminorm{\tilde{\veck}_N}^2\Seminorm{\widehat{\vecv_N^\star}}^2 \\
\geq & \Brackets{\frac{2}{\pi}}^{2d} \frac{\Lambda'}{2} \sum_{\vecxi\in \mathcal{F}_{N\shortminus}} \Seminorm{\tilde{\veck}_N}^2\Seminorm{\widehat{\vecv_N}}^2 \\
= & \Brackets{\frac{2}{\pi}}^{2d} \frac{\Lambda'}{2} \Norm{\TDop_N \vecv_N}_{L^2(Y)}^2.
\end{aligned}
\]
Meanwhile, for the right-hand side,
\[
\begin{aligned}
&\Seminorm{\AngleBrackets{\mathcal{R}_N\Brackets{\TDopS_N \widebar{ \vecv_N}}:\Q_N\Brackets{\tenC_N:\matE}}}\leq \Norm{\mathcal{R}_N\Brackets{\TDopS_N \vecv_N}}_{L^2(Y)} \Norm{\Q_N\Brackets{\tenC_N:\matE}}_{L^2(Y)} \\
\leq & C\Lambda''\Norm{\TDop_N \vecv_N}_{L^2(Y)} \sqrt{\matE:\matE}.
\end{aligned}
\]
We hence obtain that $\Norm{\TDop_N \vecw_N}_{L^2(Y)}\leq C\Lambda''/\Lambda' \sqrt{\matE:\matE}$. Recalling that $\Seminorm{\tilde{\veck}_N}\geq 2\pi\Seminorm{\vecxi}$, we show that $\Norm{\vecw_N}_{H^1(Y)}$ is uniformly bounded. Up to a subsequence, we derive that $\vecw_N \rightharpoonup \vecw_\infty$ in $H^1_\#(Y;\Cfield^d)$ (such a subsequence is still denoted by $\Braces{\vecw_N}$).

\paragraph{Step2} The goal is proving $\Q_N\Brackets{\tenC_N:\TDopS_N \vecw_N} \rightharpoonup \tenC:\nablaS \vecw_\infty$. According to the results presented in \cref{prop:QN}, we are left to prove that $\tenC_N:\TDopS_N\vecw_N \rightharpoonup \tenC:\nablaS \vecw_\infty$ in $L^2(Y;\Sym^d)$. Arbitrarily choosing $\matV\in L^2(Y;\Sym^d)$, we have
\[
\AngleBrackets{\matV:\tenC_N:\TDopS_N\vecw_N}=\AngleBrackets{\Brackets{\matV:\tenC_N}:\Brackets{\TDopS_N\vecw_N}}.
\]
Note that $\matV:\tenC_N$ converges to $\matV:\tenC$ strongly by Assumption C and $\TDopS_N\vecw_N \rightharpoonup \nablaS \vecw_\infty$ by \cref{prop:TDN}, it comes that
\[
\AngleBrackets{\matV:\tenC_N:\TDopS_N\vecw_N} \rightarrow \AngleBrackets{\matV:\tenC:\nablaS \vecw_\infty},
\]
and we hence show that $\Q_N\Brackets{\tenC_N:\TDopS_N \vecw_N} \rightharpoonup \tenC:\nablaS \vecw_\infty$.

\paragraph{Step3} The final step is to identify $\vecw_\infty$ as $\vecu$. For any $\vecv \in C^\infty_\#(Y;\Cfield^d)$, we have that $\TDopS_N \vecv$ converges to $\nablaS \vecv$ strongly in $L^2(Y;\Sym^d)$ according to \cref{prop:TDN}. It is not difficult to show that $\mathcal{R}_N\Brackets{\TDopS_N \vecv}\rightarrow \nablaS \vecv$ because $\mathcal{R}_N$ is essentially a mollifier. Applying the ``strong-weak'' argument in both sides of \cref{eq:WL form5}, we arrive at
\[
\AngleBrackets{\nablaS \widebar{\vecv}:\tenC:\nablaS\vecw_\infty}=-\AngleBrackets{\nablaS\widebar{\vecv}:\tenC:\matE}
\]
for any smooth $\vecv$, which implies $\vecw_\infty=\vecu$. Moreover, the weak limiting point of any subsequence of $\Braces{\vecw_N}$ is unique, we complete the proof of $\vecw_N \rightharpoonup \vecu$.
\end{proof}

Let $\tenC_N^{\textnormal{eff},W}$ be the effective coefficients obtained by Willot's scheme. It is easy to show that
\[
\begin{aligned}
\tenC_N^{\textnormal{eff},\textnormal{W}}:\matE=&N^{-d}\sum_{\vecI\in \I_N} \tenC_N^\star[\vecI]:\matVarep_N^\star[\vecI]\\
=& N^{-d}\sum_{\vecI\in \I_N}\tenC_N^\star[\vecI]:\Brackets{\fint_{Y_\vecI} \DopS_N\vecu_N\dx \vecx+\matE}\\
=&\AngleBrackets{\tenC_N:\Brackets{\DopS_N \vecu_N+\matE}}\\
=&\AngleBrackets{\tenC_N:\Brackets{\TDopS_N \vecw_N+\matE}},
\end{aligned}
\]
where $\matVarep^\star_N$ is from \cref{eq:WL form1}, $\vecu_N$ is from \cref{eq:WL form3,eq:WL form4}, and $\vecw_N$ is from \cref{eq:WL form5}. A direct result from \cref{lem:WL key} is the convergence of the effective coefficients.
\begin{theorem}
Under Assumptions A-C, $\tenC_N^{\textnormal{eff},\textnormal{W}}$ converges to $\tenC^\textnormal{eff}$ as $N\rightarrow \infty$, where $\tenC^\textnormal{eff}$ is defined by \cref{eq:eff}.
\end{theorem}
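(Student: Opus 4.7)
The plan is to pass to the limit in the already-derived representation
\[
\tenC_N^{\textnormal{eff},\textnormal{W}}:\matE = \AngleBrackets{\tenC_N:\TDopS_N \vecw_N} + \AngleBrackets{\tenC_N:\matE},
\]
and compare it term by term with $\tenC^\textnormal{eff}:\matE = \AngleBrackets{\tenC:\nablaS \vecu} + \AngleBrackets{\tenC:\matE}$, which is the definition \cref{eq:eff}. Since $\matE\in\Sym^d$ is arbitrary, this is enough to conclude $\tenC_N^{\textnormal{eff},\textnormal{W}}\to \tenC^\textnormal{eff}$.

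First, I would record that under Assumptions A--C the sequence $\tenC_N$ converges to $\tenC$ strongly in $L^p(Y)$ for every finite $p$. Indeed $\tenC_N(\vecx)-\tenC(\vecx)$ is uniformly bounded by $2\Lambda''$ in operator norm and is supported on the union of voxels $Y_\vecI$ that meet two distinct subdomains $D_l$; because each $\partial D_l$ is Lipschitz, the total volume of this union is $O(N^{-1})$, so $\Norm{\tenC_N-\tenC}_{L^p(Y)}\to 0$. In particular, $\AngleBrackets{\tenC_N:\matE}\to \AngleBrackets{\tenC:\matE}$ immediately, handling the second term.

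For the first term I would combine this strong convergence with the weak convergence of the strain-like quantity. By \cref{lem:WL key} we have $\vecw_N\rightharpoonup \vecu$ in $H^1_\#(Y;\Cfield^d)$, together with the uniform bound $\Norm{\TDop_N\vecw_N}_{L^2(Y)}\leq C$ shown in Step 1 of that proof. \cref{prop:TDN} then yields $\TDopS_N\vecw_N \rightharpoonup \nablaS \vecu$ weakly in $L^2(Y;\Sym^d)$. Writing
\[
\AngleBrackets{\tenC_N:\TDopS_N\vecw_N} - \AngleBrackets{\tenC:\nablaS\vecu} = \AngleBrackets{(\tenC_N-\tenC):\TDopS_N\vecw_N} + \AngleBrackets{\tenC:(\TDopS_N\vecw_N-\nablaS\vecu)},
\]
the first piece tends to zero because a strongly $L^2$-convergent sequence tests against an $L^2$-bounded one, and the second tends to zero because $\tenC\in L^\infty$ so $\tenC:\,\cdot\,$ is a continuous linear functional on $L^2(Y;\Sym^d)$ and therefore respects weak convergence.

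The only delicate point is really the strong $L^p$ convergence $\tenC_N\to\tenC$, which is where Assumption~C (image-like data) together with Assumption~A (Lipschitz subdomains) is used; the remaining manipulations are the standard ``strong--weak'' pairing already exploited in Step~2 of the proof of \cref{lem:WL key}. Combining the two term-wise limits gives $\tenC_N^{\textnormal{eff},\textnormal{W}}:\matE \to \AngleBrackets{\tenC:(\nablaS\vecu+\matE)} = \tenC^\textnormal{eff}:\matE$ for every $\matE\in\Sym^d$, finishing the proof.
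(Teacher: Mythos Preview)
Your proposal is correct and follows essentially the same approach the paper intends: the paper states the theorem as ``a direct result from \cref{lem:WL key}'' together with the representation $\tenC_N^{\textnormal{eff},\textnormal{W}}:\matE=\AngleBrackets{\tenC_N:(\TDopS_N \vecw_N+\matE)}$, and your argument spells out exactly this, using the strong convergence $\tenC_N\to\tenC$ in $L^2$ (from Assumptions A and C) paired against the weak convergence $\TDopS_N\vecw_N\rightharpoonup\nablaS\vecu$ (from \cref{prop:TDN} and \cref{lem:WL key}). The only cosmetic difference is that you decompose the product $\AngleBrackets{\tenC_N:\TDopS_N\vecw_N}$ explicitly, whereas the paper's Step~2 in the proof of \cref{lem:WL key} already establishes $\tenC_N:\TDopS_N\vecw_N\rightharpoonup\tenC:\nablaS\vecu$ weakly in $L^2(Y;\Sym^d)$, from which the convergence of averages follows at once.
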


\section{Convergence rates of the FEM scheme} \label{sec:FEM}
We emphasize that \cref{alg:FEM} is actually equivalent to solving a variational problem: find $\vecu_N \in V_N^d$ with $\AngleBrackets{\vecu_N}=\vecZero$ such that for any $\vecv_N \in V_N^d$,
\begin{equation}\label{eq:FEM varia}
\AngleBrackets{\nablaS \vecv_N:\tenC_N:\nablaS \vecu_N}=-\AngleBrackets{\nablaS \vecv_N:\tenC_N}:\matE.
\end{equation}

Because $\tenC_N$ is only piecewisely constant and globally discontinuous, we cannot assume a better regularity than $W^{1,\infty}$ of $\vecu$. The local regularization operator proposed in \cite{Bernardi1998} is an appropriate framework for our problem.
\begin{lemma}[cf. \cite{Bernardi1998}]
There exists an operator $\Lop_N:H^1_\#(Y) \rightarrow V_N$ such that for any $v\in H^1_\#(Y)$, $\Norm{v-\Lop_N v}_{H^1(Y)}\rightarrow 0$ as $N\rightarrow \infty$. Moreover, there exists a positive constant $C_\textnormal{int}$, such that if $v\in W^{1,p}(\Delta_\vecI)$ with $1\leq p \leq \infty$,
\[
\Norm{v-\Lop_N v}_{W^{1,p}(Y_\vecI)} \leq C_\textnormal{int} \Norm{v}_{W^{1,p}(\Delta_\vecI)},
\]
and if $v\in H^2(\Delta_\vecI)$,
\[
\Norm{v-\Lop_N v}_{H^1(Y_\vecI)} \leq C_\textnormal{int}  N^{-1}\Norm{v}_{H^2(\Delta_\vecI)},
\]
where $Y_\vecI \subset \Delta_\vecI$ and
\[
\Delta_\vecI=\Brackets{\frac{\vecI_1-1}{N}, \frac{\vecI_1+1}{N}}\times\Brackets{\frac{\vecI_2-1}{N}, \frac{\vecI_2+1}{N}}\times\Brackets{\frac{\vecI_3-1}{N}, \frac{\vecI_3+1}{N}}.
\]
\end{lemma}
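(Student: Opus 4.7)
The plan is to build $\Lop_N$ as a Clement / Scott-Zhang-type quasi-interpolant adapted to the periodic trilinear space $V_N$. For each lattice node $\vecI/N$, I would set $(\Lop_N v)(\vecI/N)$ equal to a weighted average of $v$ over the patch $\Delta_\vecI$ centered at that node; periodicity is enforced by identifying $\vecI$ with $\vecI + N\vecxi$ for any $\vecxi \in \Z^d$, which is compatible with the periodic extension of $v$. To obtain the sharp first-order estimate I would choose the averaging weight so that $\Lop_N$ reproduces all affine polynomials on every element $Y_\vecI$; in the spirit of Scott-Zhang this can be realized by taking the weight to be the $L^2(\Delta_\vecI)$-dual of the trilinear nodal basis function. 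By construction $\Lop_N v$ lies in $V_N$, and the periodic node identification places it in the periodic subspace.

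The two local bounds then follow from a two-step scaling argument on the reference element $(0,1)^d$. First, H\"{o}lder's inequality applied to the nodal averaging formula controls each nodal value $\Seminorm{(\Lop_N v)(\vecI/N)}$ by an appropriately scaled $\Norm{v}_{L^p(\Delta_\vecI)}$ uniformly in $1 \leq p \leq \infty$. Combining this with the trilinear inverse estimate (which costs a factor $N$) and summing over the $2^d$ nodes adjacent to $Y_\vecI$ gives
\[
\Norm{\Lop_N v}_{W^{1,p}(Y_\vecI)} \leq C \Norm{v}_{W^{1,p}(\Delta_\vecI)},
\]
after which the triangle inequality delivers the first bound. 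For the $H^2$ refinement, since $\Lop_N$ reproduces affine polynomials, I would subtract an arbitrary affine $\pi$ from $v$, apply the $W^{1,2}$-stability to $v-\pi$, and then take $\pi$ to be the first-order Taylor polynomial with coefficients given by the averages of $v$ and $\nabla v$ over $\Delta_\vecI$; the Deny-Lions / Bramble-Hilbert lemma on $\Delta_\vecI$ produces the factor $N^{-1}$ in front of $\Norm{v}_{H^2(\Delta_\vecI)}$.

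For the qualitative convergence $\Norm{v - \Lop_N v}_{H^1(Y)} \to 0$ on all of $H^1_\#(Y)$, I would argue by density: smooth $Y$-periodic functions are dense in $H^1_\#(Y)$, and for a smooth $v$ the sharp $H^2$ estimate summed over $\vecI$ (accounting for the bounded overlap of the patches $\Delta_\vecI$) yields $O(N^{-1})$ decay. The global $W^{1,2}$-stability, obtained by summing the first local bound over $\vecI$, then extends this to the $H^1_\#(Y)$ closure via a standard $3\varepsilon$-argument.

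The main obstacle is constructing the dual weight so that $\Lop_N$ is simultaneously well-defined under the periodic identification of boundary nodes, affine-reproducing on every element, and uniformly bounded in all $W^{1,p}$ norms including $p = \infty$; on a uniform rectilinear grid these requirements are compatible and the construction is explicit, but verifying the $p = \infty$ stability requires a pointwise bound on the dual weight rather than a duality pairing, since the latter would leak an unwanted logarithmic factor.
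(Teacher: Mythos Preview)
The paper does not supply its own proof of this lemma; it simply cites Bernardi's construction \cite{Bernardi1998} and uses the stated bounds as a black box. Your proposal is precisely the standard Cl\'ement/Scott--Zhang quasi-interpolation argument that underlies that reference, and the outline is sound: nodal averaging with a polynomial-reproducing dual weight, local stability by scaling plus inverse estimate, the $H^2$ rate via Bramble--Hilbert after subtracting an affine, and global convergence by density and uniform $H^1$-stability. One small simplification: rather than taking the weight dual to $\phi_\vecI$ over the full $3^d$-element patch $\Delta_\vecI$, it is enough (and cleaner) to fix a single element $Y_{\vecI'}\subset\Delta_\vecI$ containing the node and take the dual basis there; this already reproduces trilinear, hence affine, functions on every element and avoids inverting a $3^d\times 3^d$ Gram matrix. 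Your closing concern about a logarithmic loss at $p=\infty$ is unwarranted here: the dual weight is a fixed polynomial on the reference element, so $\Seminorm{(\Lop_N v)(\vecI/N)}\leq \Norm{\psi}_{L^1}\Norm{v}_{L^\infty(\Delta_\vecI)}$ holds directly with a scale-invariant constant, and no duality pairing through $L^1$--$L^\infty$ is needed.
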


Utilizing C\'{e}a's lemma, we could obtain a similar priori estimate as \cref{thm:MS main}
\begin{lemma} \label{lem:FEM pri}
Let $\tenC$ and $\tenC_N$ satisfy Assumptions A-C, $\vecu$ be the solution of \cref{eq:varia form}. Then there exists a unique solution $\vecu_N$ of the variation problem \cref{eq:FEM varia} with an estimate
\[
\Lambda' \Norm{\nablaS \Brackets{\vecu_N-\vecu}}_{L^2(Y)} \leq C\Braces{\Lambda''\Norm{\nablaS\Brackets{\vecu-\Lop_N \vecu}}_{L^2(Y)}+ \Norm{\Brackets{\tenC-\tenC_N}:\matVarep}_{L^2(Y)}}
\]
where $\matVarep=\nablaS \vecu+\matE$ and $C$ is a positive constant.
\end{lemma}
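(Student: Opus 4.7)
The plan is a C\'{e}a-style argument in which the coefficient perturbation $\tenC \to \tenC_N$ enters as a consistency error. First I would establish well-posedness of \cref{eq:FEM varia}. The bilinear form $a_N(\vecv,\vecw) \coloneqq \AngleBrackets{\nablaS \vecv : \tenC_N : \nablaS \vecw}$ is pointwise bounded by $\Lambda''$ and coercive in the seminorm $\Norm{\nablaS \cdot}_{L^2(Y)}$ with constant $\Lambda'$ by Assumptions~B--C. Combining this with Korn's second inequality and the Poincar\'{e} inequality on zero-mean periodic vector fields upgrades the coercivity to the full $H^1$-norm on $\Braces{\vecv_N \in V_N^d: \AngleBrackets{\vecv_N} = \vecZero}$, and Lax--Milgram supplies the unique $\vecu_N$.

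For the error estimate I would exploit a perturbed Galerkin orthogonality. For any $\vecv_N \in V_N^d$, the difference $\vecu_N - \vecv_N$ lies in $V_N^d \subset H^1_\#(Y;\R^d)$ and hence tests both \cref{eq:FEM varia} and \cref{eq:varia form}. Subtracting the two resulting identities, then adding and subtracting $a_N(\vecv_N, \vecu_N - \vecv_N)$ together with $\AngleBrackets{\nablaS(\vecu_N-\vecv_N):\tenC_N:\nablaS \vecu}$, the rearrangement
\[
a_N(\vecu_N - \vecv_N, \vecu_N - \vecv_N) = a_N(\vecu - \vecv_N, \vecu_N - \vecv_N) - \AngleBrackets{\nablaS(\vecu_N - \vecv_N) : (\tenC_N - \tenC) : \matVarep}
\]
emerges, with $\matVarep = \nablaS \vecu + \matE$. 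Applying $a_N$-coercivity on the left, the pointwise bound $\Seminorm{\matF : \tenC_N : \matU} \leq \Lambda'' \Seminorm{\matF}\Seminorm{\matU}$ combined with H\"{o}lder on the first term on the right, and a direct H\"{o}lder bound on the second, then cancelling a factor of $\Norm{\nablaS(\vecu_N - \vecv_N)}_{L^2(Y)}$, yields
\[
\Lambda' \Norm{\nablaS(\vecu_N - \vecv_N)}_{L^2(Y)} \leq \Lambda'' \Norm{\nablaS(\vecu - \vecv_N)}_{L^2(Y)} + \Norm{(\tenC_N - \tenC) : \matVarep}_{L^2(Y)}.
\]
A triangle inequality on $\vecu_N - \vecu = (\vecu_N - \vecv_N) + (\vecv_N - \vecu)$ together with the choice $\vecv_N = \Lop_N \vecu$ (applied componentwise) then deliver the stated bound, with $C$ absorbing the constant factor $\Lambda' + \Lambda'' \leq 2\Lambda''$.

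I do not foresee a serious obstacle: the argument is essentially the textbook C\'{e}a lemma augmented by a single consistency term that captures the effect of replacing $\tenC$ with the voxelized coefficient $\tenC_N$. The only mild care required is that admissibility of $\vecu_N - \vecv_N$ as a test function in the continuous problem follows immediately from $V_N^d \subset H^1_\#(Y;\R^d)$, and the mean-zero normalization of $\vecu$ and $\vecu_N$ never intervenes in the identity because only symmetric gradients are tested.
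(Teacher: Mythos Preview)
Your argument is correct and is precisely the C\'{e}a-type proof the paper has in mind: the paper itself merely states ``Utilizing C\'{e}a's lemma'' before the lemma and then writes ``The proof of the above lemma is straight, and we omit it here.'' Your derivation of the perturbed Galerkin identity, the coercivity/boundedness bounds, and the triangle-inequality finish with $\vecv_N=\Lop_N\vecu$ fill in exactly those omitted details.
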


The proof of the above lemma is straight, and we omit it here. Interestingly, the assumption $\Lambda'>0$ could be relaxed as $\Lambda'\geq 0$ in the FEM scheme. Because \cref{eq:FEM varia} minimizes the energy $\AngleBrackets{\nablaS \vecv_N:\tenC_N:\nablaS \vecv_N}$, and the subdomain $\Braces{\tenC_N(\vecx)=\vecZero}$ corresponding to the porous part of the RVE does \emph{not} affect the global coercivity. This good property does not hold for Moulinec-Suquent's scheme, which could be explained from the proof of \cref{thm:MS main}: the coercivity of $\AngleBrackets{\nablaS\Brackets{\mathcal{R}_N \widebar{\vecv_N}}:\Q_N\Brackets{\tenC_N:\nablaS \vecv_N}}$ is proven by converting it into the Fourier space, and such an operation is not applicable under $\Lambda'=0$. This observation to some extent reveals why the FEM scheme is more robust in porous settings \cite{Schneider2017}.

Convergence rate estimates rely on the regularity of $\vecu$. The celebrating work by Li and Nirenberg \cite{Li2003a} says that under some regularity assumptions of subdomains, the solution $\vecu$ will be globally gradient bounded.
\begin{proposition}[cf. \cite{Li2003a}] \label{prop:FEM reg}
Let the subdomains $D_0,D_1,\dots,D_M$ satisfy several regularity assumptions and $\vecu$ be the solution of \cref{eq:varia form}. Then there exists a positive constant $C_\textnormal{reg}$ such that
\begin{equation} \label{eq:reg}
\Norm{\nabla \vecu}_{L^\infty(Y)} \leq C_\textnormal{reg} \sqrt{\matE:\matE}, \text{ and } \Norm{\vecu}_{H^2(D_l)} \leq C_\textnormal{reg} \sqrt{\matE:\matE}
\end{equation}
for any subdomain $D_l$.
\end{proposition}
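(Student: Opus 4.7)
The statement is essentially the Li--Nirenberg transmission regularity theorem \cite{Li2003a} applied to our elasticity system, so the plan is to check that our setting fits their hypotheses and then invoke it. Before doing so I would first reduce to a normalised problem: since \cref{eq:varia form} is linear in $\matE$ and Assumption B provides coercivity, the map $\matE\mapsto \vecu$ is linear and bounded, so it is enough to prove \cref{eq:reg} with the factor $\sqrt{\matE:\matE}$ replaced by a universal constant depending only on $\tenC$ and on the geometry of $\{D_l\}$.

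The piecewise $H^2$ bound is the easier half. Inside each subdomain $D_l$ the coefficient is the constant tensor $\tensorsym{\Lambda}_l$, so Nirenberg's tangential difference-quotient argument, supplemented by using the equation itself to recover the normal second derivatives, yields interior $H^2$ control. At the interfaces one exploits the transmission conditions (continuity of $\vecu$ and of the normal traction $\tenC:\nablaS\vecu\cdot\vecn$) together with local flattening of the sufficiently smooth $\partial D_l$ to lift this estimate up to the boundary, delivering $\Norm{\vecu}_{H^2(D_l)}\leq C_\textnormal{reg}\sqrt{\matE:\matE}$ one subdomain at a time.

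The global Lipschitz bound $\Norm{\nabla \vecu}_{L^\infty(Y)}\leq C_\textnormal{reg}\sqrt{\matE:\matE}$ is the hard part, because $\nabla\vecu$ genuinely jumps across the interfaces and hence cannot be obtained from the piecewise $H^2$ estimate by Sobolev embedding. The Li--Nirenberg strategy is a blow-up/compactness scheme: for any $\vecx_0\in Y$ near one or several interfaces, one rescales around $\vecx_0$, extracts along a subsequence a limit that solves the constant-coefficient transmission problem on a limiting configuration of half-spaces (or cones at a junction), and invokes a Liouville-type rigidity statement for that limit to close a Campanato-type decay estimate for the mean oscillation of $\nabla\vecu$ on shrinking balls. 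Away from interfaces, interior Schauder estimates applied in each $D_l$ already give the bound; along a smooth portion of a single interface the Agmon--Douglis--Nirenberg boundary Schauder estimates for the transmission system give a one-sided $C^{1,\alpha}$ bound. The main obstacle I anticipate is making the compactness step uniform in the location of $\vecx_0$, in particular ruling out gradient concentration at points where several $D_l$ meet; this is precisely the role played by the unspecified ``several regularity assumptions'' on the subdomains in the statement, and it is where I expect most of the technical effort to sit.
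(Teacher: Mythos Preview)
Your outline is a reasonable sketch of the Li--Nirenberg argument, but you should be aware that the paper does not prove this proposition at all: it is stated with the annotation ``cf.~\cite{Li2003a}'' and invoked as a black box from the literature, with no accompanying proof or even proof sketch. So there is nothing in the paper to compare your proposal against beyond the bare citation.

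That said, your plan is faithful to the structure of the original Li--Nirenberg paper: the linear reduction to $\sqrt{\matE:\matE}=1$, the piecewise $H^2$ estimate via tangential difference quotients plus the transmission conditions, and the global Lipschitz bound via a blow-up/Campanato scheme are indeed the main ingredients there. Your identification of the multi-junction case as the delicate point is also accurate; in \cite{Li2003a} this is handled by imposing that the interfaces are $C^{1,\alpha}$ and that only finitely many inclusions touch, which is presumably what the paper's vague ``several regularity assumptions'' stands in for. If you intend to actually write out a proof rather than cite the result, be prepared for substantial length---the Li--Nirenberg argument is not short---and note that the periodic boundary condition here is harmless (it can be absorbed by working on the torus or by localising away from $\partial Y$ using periodicity).
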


Then the convergence rate estimate of $\Norm{\nablaS\Brackets{\vecu_N-\vecu}}$ is stated as follows.
\begin{theorem} \label{thm:FEM stress}
Suppose that the assumption in \cref{prop:FEM reg} holds. Let $\tenC$ and $\tenC_N$ satisfy Assumptions A-C, $\vecu_N$ be the solution of \cref{eq:FEM varia}. Then
\[
\Norm{\nablaS\Brackets{\vecu_N-\vecu}}_{L^2(Y)} \leq CN^{-1/2}\sqrt{\matE:\matE},
\]
where the positive constant $C$ is independent of $N$.
\end{theorem}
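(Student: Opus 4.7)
The plan is to invoke the a priori estimate in \cref{lem:FEM pri}, which reduces the claim to bounding the two quantities
\[
T_1\coloneqq \Norm{\nablaS\Brackets{\vecu-\Lop_N \vecu}}_{L^2(Y)}, \qquad T_2\coloneqq \Norm{\Brackets{\tenC-\tenC_N}:\matVarep}_{L^2(Y)},
\]
both by $CN^{-1/2}\sqrt{\matE:\matE}$. The global regularity of $\vecu$ is only $W^{1,\infty}$; the improved $H^2$ regularity holds only within each subdomain $D_l$, so both $T_1$ and $T_2$ must be split into contributions from ``interior'' elements (whose enlarged patch $\Delta_\vecI$ is contained in a single $D_l$) and ``boundary'' elements (whose $\Delta_\vecI$ meets at least two subdomains).

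First I would classify $\I_N=\I_N^\textnormal{int}\cup \I_N^\textnormal{bdy}$ accordingly and use Assumption A to control $\#\I_N^\textnormal{bdy}$. Since each interface $\partial D_l\cap \partial D_{l'}$ is Lipschitz and hence has finite $(d-1)$-dimensional Hausdorff measure, the $1/N$-tubular neighborhood of $\bigcup_{l\neq l'}\partial D_l\cap \partial D_{l'}$ has Lebesgue measure $O(N^{-1})$; every $Y_\vecI$ with $\vecI\in\I_N^\textnormal{bdy}$ lies inside this tubular neighborhood, so
\[
\#\I_N^\textnormal{bdy}\leq CN^{d-1}, \qquad \sum_{\vecI\in\I_N^\textnormal{bdy}}\Meas(Y_\vecI)\leq CN^{-1}.
\]

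For $T_1$, on interior elements I apply the $H^2$-estimate of the regularization operator, sum using the finite-overlap property of the family $\Braces{\Delta_\vecI}$, and invoke \cref{prop:FEM reg} to bound $\sum_l\Norm{\vecu}_{H^2(D_l)}^2\leq C_\textnormal{reg}^2(M+1)\matE:\matE$, obtaining an $O(N^{-2})$ bound for the squared norm. On boundary elements I use instead the $W^{1,\infty}$-stability of $\Lop_N$ together with $\Norm{\nabla\vecu}_{L^\infty(Y)}\leq C_\textnormal{reg}\sqrt{\matE:\matE}$, giving
\[
\sum_{\vecI\in\I_N^\textnormal{bdy}}\Norm{\nablaS(\vecu-\Lop_N\vecu)}_{L^2(Y_\vecI)}^2 \leq C\,C_\textnormal{reg}^2\matE:\matE\sum_{\vecI\in\I_N^\textnormal{bdy}}\Meas(Y_\vecI)\leq CN^{-1}\matE:\matE.
\]
Hence $T_1\leq CN^{-1/2}\sqrt{\matE:\matE}$. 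For $T_2$, Assumption C ensures that $\tenC-\tenC_N$ vanishes on every interior element, so the integrand is supported in $\bigcup_{\vecI\in\I_N^\textnormal{bdy}}Y_\vecI$; combining $\Seminorm{\tenC-\tenC_N}\leq 2\Lambda''$ with $\Norm{\matVarep}_{L^\infty(Y)}\leq C\sqrt{\matE:\matE}$ (again from \cref{prop:FEM reg}) and the boundary-measure bound yields $T_2\leq CN^{-1/2}\sqrt{\matE:\matE}$.

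The main obstacle is the boundary-layer estimate: the interior $H^2$ contribution alone would yield the optimal $O(N^{-1})$ rate, but the interface-adjacent elements, on which $\vecu$ genuinely fails to be $H^2$, force the slower $N^{-1/2}$ rate, and the argument hinges on using Lipschitzness of the subdomains and the $L^\infty$ bound on $\nabla\vecu$ to control precisely the $O(N^{-1})$ total volume of the bad elements. Combining the two estimates in \cref{lem:FEM pri} gives the claimed inequality.
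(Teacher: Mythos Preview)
Your proposal is correct and follows essentially the same route as the paper: split $\I_N$ into interior elements (those with $\Delta_\vecI\subset D_l$) and boundary elements, use the Lipschitz interface geometry to get $\#\I_N^\textnormal{bdy}=O(N^{d-1})$, then bound $T_1$ via the $H^2$-estimate of $\Lop_N$ on interior patches and a $W^{1,\infty}$/$H^1$-stability bound on boundary patches, and bound $T_2$ using Assumption~C plus the $L^\infty$ gradient bound from \cref{prop:FEM reg}. The only cosmetic difference is that the paper invokes the $H^1$-stability of $\Lop_N$ on boundary elements and then bounds $\Norm{\vecu}_{H^1(\Delta_\vecI)}$ pointwise via $\Norm{\nabla\vecu}_{L^\infty}$, whereas you go directly through $W^{1,\infty}$-stability; both yield the same $O(N^{-1})$ contribution to the squared norm.
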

\begin{proof}
Since \cref{prop:FEM reg} tells that $\vecu$ is piecewisely regular enough, we first part $\I_N$ into two sets $\I_N'$ and $\I_N''$ defined as
\[
\I_N'=\Braces{\vecI:\exists 0\leq l\leq M, \text{ s.t. }\Delta_\vecI \subset D_l} \text{ and } \I_N''=\I_N \setminus \I_N'.
\]
The cardinality of $\I_N''$ can be controlled as $\Card\Brackets{\I_N''}\leq C_0N^{d-1}$, because $Y_\vecI$ for $\vecI\in \I_N''$ belongs to $O(N^{-1})$-width layers of interfaces of subdomains. Then, recalling Assumption C, we have $\tenC=\tenC_N$ in $Y_\vecI$ with $\vecI \in \I_N'$ and
\[
\begin{aligned}
\Norm{(\tenC-\tenC_N):\matVarep}_{L^2(Y)}^2=&\sum_{\vecI\in \I_N''}\int_{Y_\vecI}\Seminorm{(\tenC-\tenC_N):\matVarep}^2\dx \vecx\\
\leq& C(d,\Lambda'') \Norm{\nablaS\vecu+\matE}_{L^\infty(Y)}^2\Braces{N^{-d}\Card\Brackets{\I_N''}}\\
\leq& C(d,\Lambda'', C_0, C_\textnormal{reg}) \Brackets{\matE:\matE}N^{-1},
\end{aligned}
\]
where the last line follows from \cref{prop:FEM reg}. Similarly, for $\Norm{\nablaS\Brackets{\vecu-\Lop_N\vecu}}_{L^2(Y)}$
\[
\begin{aligned}
&\Norm{\nablaS\Brackets{\vecu-\Lop_N\vecu}}_{L^2(Y)}^2\\
\leq & C(d) \Brackets{\sum_{\vecI\in \I_N'}+\sum_{\vecI\in \I_N''}}\int_{Y_\vecI} \Seminorm{\nabla\Brackets{\vecu-\Lop_N\vecu}}^2\dx \vecx \\
\leq & C(d,C_\textnormal{int})\Braces{\sum_{\vecI\in \I_N'} N^{-2}\Norm{\vecu}_{H^2(\Delta_\vecI)}^2 + \sum_{\vecI\in \I_N''} \Norm{\vecu}_{H^1(\Delta_\vecI)}^2} \\
\leq & C(d,C_\textnormal{int})\Braces{N^{-2} \sum_{0\leq l \leq M} \Norm{\vecu}_{H^2(D_l)}^2 + N^{-d}\Card\Brackets{\I_N''} \Norm{\nabla \vecu}_{L^\infty(Y)}^2} \\
\leq & C(d,C_\textnormal{int}, C_\textnormal{reg})\Brackets{\matE:\matE} \Braces{N^{-2}+C_0N^{-1}}.
\end{aligned}
\]
Combining \cref{lem:FEM pri}, we complete the estimate.
\end{proof}

For the FEM scheme, we can obtain numerical effective coefficients by the formula
\[
\tenC^{\textnormal{eff},\textnormal{F}}_{N}:\matE \coloneqq N^{-d} \sum_{\vecI\in \I_N} \tenC_N^\star[\vecI]:\Brackets{\nablaS \vecu_N(\vecx_\vecI)+\matE}.
\]
Another important estimate in this section shows that $\tenC^{\textnormal{eff},\textnormal{F}}_{N}:\matE$ can approximate $\tenC^\textnormal{eff}$ with a higher rate.
\begin{theorem}
Under the same assumptions in \cref{thm:FEM stress}. Then
\[
\Seminorm{\matE:\Brackets{\tenC^\textnormal{eff}-\tenC^{\textnormal{eff},\textnormal{F}}_{N}}:\matE} \leq C N^{-1} \Brackets{\matE:\matE},
\]
where $\tenC^\textnormal{eff}$ is defined by \cref{eq:eff} and $C$ is a positive constant independent of $N$.
\end{theorem}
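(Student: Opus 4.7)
The plan is to exploit an \emph{energy} representation of both effective coefficients, since only this representation produces the quadratic-error structure needed for the $N^{-1}$ rate. The first step rewrites $\matE:\tenC^{\textnormal{eff},\textnormal{F}}_N:\matE$ as an energy integral. On each trilinear element $Y_\vecI$, every component $\partial_j \SquareBrackets{\vecu_N}_i$ of $\nablaS \vecu_N$ is constant in $x_j$ and bilinear in the other two coordinates, so the one-point midpoint rule is exact and $\nablaS \vecu_N(\vecx_\vecI) = \fint_{Y_\vecI} \nablaS \vecu_N\,\dx \vecx$. Combined with $\tenC_N$ being constant on each $Y_\vecI$, this collapses the defining sum of $\tenC^{\textnormal{eff},\textnormal{F}}_N:\matE$ to $\AngleBrackets{\tenC_N:\matVarep_N}$ with $\matVarep_N := \nablaS \vecu_N + \matE$. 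Taking $\vecv_N = \vecu_N$ in \cref{eq:FEM varia} and invoking the major symmetry from Assumption B then gives $\matE:\tenC^{\textnormal{eff},\textnormal{F}}_N:\matE = \AngleBrackets{\matVarep_N:\tenC_N:\matVarep_N}$. The same manipulation applied to \cref{eq:varia form} with $\vecv = \vecu$ yields $\matE:\tenC^\textnormal{eff}:\matE = \AngleBrackets{\matVarep:\tenC:\matVarep}$, where $\matVarep := \nablaS \vecu + \matE$.

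With both effective quantities in hand, I decompose
\[
D := \matE:(\tenC^\textnormal{eff} - \tenC^{\textnormal{eff},\textnormal{F}}_N):\matE = \AngleBrackets{(\matVarep - \matVarep_N):\tenC:(\matVarep + \matVarep_N)} + \AngleBrackets{\matVarep_N:(\tenC - \tenC_N):\matVarep_N}.
\]
Because $\matVarep - \matVarep_N = \nablaS(\vecu - \vecu_N)$ is the symmetric gradient of an admissible test function in \cref{eq:varia form}, Galerkin orthogonality gives $\AngleBrackets{(\matVarep - \matVarep_N):\tenC:\matVarep} = 0$, and the major symmetry of $\tenC$ then reduces the first summand to
\[
\AngleBrackets{(\matVarep - \matVarep_N):\tenC:(\matVarep + \matVarep_N)} = -\AngleBrackets{(\matVarep - \matVarep_N):\tenC:(\matVarep - \matVarep_N)}.
\]
This is the superconvergence: the cross-term in the expansion of the difference of quadratic energies cancels, leaving a quantity quadratic in the $H^1$ error of $\vecu - \vecu_N$.

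The two remaining bounds are now routine applications of \cref{thm:FEM stress} and \cref{prop:FEM reg}. The first summand satisfies $|\AngleBrackets{(\matVarep - \matVarep_N):\tenC:(\matVarep - \matVarep_N)}| \leq \Lambda''\Norm{\nablaS(\vecu - \vecu_N)}_{L^2(Y)}^2 \leq CN^{-1}(\matE:\matE)$. For the second, Assumption C forces $\tenC - \tenC_N$ to vanish outside $\Omega_N := \bigcup_{\vecI \in \I_N''} Y_\vecI$, whose measure is $O(N^{-1})$ by the counting argument from the proof of \cref{thm:FEM stress}. Splitting $\matVarep_N = \matVarep + (\matVarep_N - \matVarep)$ and combining $\matVarep \in L^\infty(Y)$ from \cref{prop:FEM reg} with the $L^2$ bound of \cref{thm:FEM stress} gives $\Norm{\matVarep_N}_{L^2(\Omega_N)} \leq CN^{-1/2}\sqrt{\matE:\matE}$, so the second summand is also $O(N^{-1}(\matE:\matE))$. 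The main obstacle is spotting this particular decomposition: a direct Cauchy--Schwarz on $\AngleBrackets{(\matVarep - \matVarep_N):\tenC:\matVarep_N}$ or $\AngleBrackets{\matE:\tenC_N:(\matVarep - \matVarep_N)}$ only produces an $O(N^{-1/2})$ bound, and the doubling of the convergence rate relies essentially on combining Galerkin orthogonality with the major symmetry of $\tenC$, in the spirit of the Aubin--Nitsche trick for energy functionals.
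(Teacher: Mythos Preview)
Your proof is correct and follows essentially the same approach as the paper: both rewrite $\matE:\tenC^\textnormal{eff}:\matE$ and $\matE:\tenC^{\textnormal{eff},\textnormal{F}}_N:\matE$ as energies, use Galerkin orthogonality $\AngleBrackets{\nablaS(\vecu-\vecu_N):\tenC:\matVarep}=0$ to eliminate the first-order cross term, and bound the remaining pieces via \cref{thm:FEM stress} together with the $O(N^{-1})$ measure of the interface band. The only cosmetic difference is that the paper expands $\matVarep_N:\tenC_N:\matVarep_N-\matVarep:\tenC:\matVarep$ around $(\tenC_N,\matVarep)$ into four terms $J_1,\dots,J_4$, whereas you expand around $\tenC$ into two terms; your grouping is slightly cleaner but the estimates are the same.
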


\begin{proof}
The proof is based on an observation that
\[
\matE:\tenC^\textnormal{eff}:\matE=\AngleBrackets{\Brackets{\nablaS \vecu+\matE}:\tenC:\Brackets{\nablaS \vecu+\matE}},
\]
which is a direct result of $\AngleBrackets{\nablaS \vecu:\tenC:\Brackets{\nablaS \vecu+\matE}}=0$. Similarly, for $\tenC^{\textnormal{eff},\textnormal{F}}_N$, recalling that $\vecu_N$ is piecewisely trilinear in each element, we have
\[
\begin{aligned}
\matE:\tenC^{\textnormal{eff},\textnormal{F}}_N:\matE&=\matE:\Braces{N^{-d} \sum_{\vecI\in \I_N} \tenC_N^\star[\vecI]:\Brackets{\nablaS \vecu_N(\vecx_\vecI)+\matE}}\\
&=\AngleBrackets{\Brackets{\nablaS \vecu_N+\matE}:\tenC_N:\Brackets{\nablaS \vecu_N+\matE}}.
\end{aligned}
\]
Then,
\[
\begin{aligned}
&\matE:\Brackets{\tenC^{\textnormal{eff},\textnormal{F}}_{N}-\tenC^\textnormal{eff}}:\matE\\
=&\AngleBrackets{\Brackets{\nablaS \vecu_N+\matE}:\tenC_N:\Brackets{\nablaS \vecu_N+\matE}}-\AngleBrackets{\Brackets{\nablaS \vecu+\matE}:\tenC:\Brackets{\nablaS \vecu+\matE}}\\
=&\AngleBrackets{\nablaS \Brackets{\vecu_N-\vecu}:\tenC_N:\nablaS \Brackets{\vecu_N-\vecu}}+2\AngleBrackets{\nablaS \Brackets{\vecu_N-\vecu}:\tenC:\Brackets{\nablaS \vecu+\matE}}\\
&+2\AngleBrackets{\nablaS \Brackets{\vecu_N-\vecu}:\Brackets{\tenC_N-\tenC}:\Brackets{\nablaS \vecu+\matE}}+\AngleBrackets{\matVarep:\Brackets{\tenC_N-\tenC}:\matVarep}\\
\coloneqq& J_1+J_2+J_3+J_4,
\end{aligned}
\]
where $\matVarep=\nablaS\vecu+\matE$ as previously. For $J_1$, we have $\Seminorm{J_1} \leq C N^{-1}(\matE:\matE)$ according to \cref{thm:FEM stress}. Meanwhile, $J_2=0$ follows from the variational form \cref{eq:varia form}. For $J_4$, applying the proof steps of \cref{thm:FEM stress}, we have $\Seminorm{J_4}\leq CN^{-1}(\matE:\matE)$ by the boundedness of $\matVarep$ and Assumption C. Then it is left to estimate $J_3$, by Young's inequality,
\[
\begin{aligned}
\Seminorm{J_3} \leq & \AngleBrackets{\nablaS \Brackets{\vecu_N-\vecu}:\nablaS\Brackets{\vecu_N-\vecu}} + \AngleBrackets{\matVarep:\Brackets{\tenC_N-\tenC}:\Brackets{\tenC_N-\tenC}:\matVarep}\\
\leq & CN^{-1}\Brackets{\matE:\matE}.
\end{aligned}
\]
We hence finish the proof.
\end{proof}

\bibliographystyle{unsrtnat}
\bibliography{refs}  






\end{document}